\date{\today}
\def\sko{{\vartheta}}
\def\z{\zeta}
\def\Pn{{\mathbb P}^N}
\def\deg{\text{deg}\,}
\def\w{\wedge}
\def\dbar{\bar\partial}
\def\C{{\mathbb C}}
\def\w{{\wedge}}
\def\Kers{{\mathcal Ker\, }}
\def\P{{\mathbb P}}
\def\W{{\mathcal W}}
\def\Pk{{\mathbb P}}
\def\A{{\mathcal A}}
\def\S{{\mathcal S}}
\def\Cu{{\mathcal C}}
\def\Pr{{\mathcal P}}
\def\K{{\mathcal K}}
\def\Hom{{\rm Hom\, }}
\def\codim{{\rm codim\,}}
\def\Im{{\rm Im\, }}
\def\K{{\mathcal K}}
\def\Ker{{\rm Ker\,  }}
\def\E{{\mathcal E}}
\def\Ok{{\mathcal O}}
\def\L{{\mathcal L}}
\def\Re{{\rm Re\,  }}
\def\1{{\bf 1}}
\def\reg{{\rm reg\,  }}
\def\L{{\mathcal L}}
\def\U{{\mathcal U}}
\def\J{{\mathcal J}}
\def\pmm{pseudomeromorphic}
\def\nbh{neighborhood }
\def\PM{{\mathcal{PM}}}
\def\be{\begin{equation}}
\def\ee{\end{equation}}
\def\Cu{{\mathcal C}}
\newtheorem{thm}{Theorem}[section]
\newtheorem{lma}[thm]{Lemma}
\newtheorem{cor}[thm]{Corollary}
\newtheorem{prop}[thm]{Proposition}
\theoremstyle{definition}
\theoremstyle{remark}
\newtheorem{preremark}{Remark}
\newtheorem{preex}{Example}
\newenvironment{remark}{\begin{preremark}}{\qed\end{preremark}}
\newenvironment{ex}{\begin{preex}}{\qed\end{preex}}
\numberwithin{equation}{section}
\title[]{Global Koppelman formulas on (singular) projective varieties}
\begin{document}

\date{\today}

\author{Mats Andersson}

\address{Department of Mathematics\\Chalmers University of Technology and the University of 
G\"oteborg\\S-412 96 G\"OTEBORG\\SWEDEN}

\email{matsa@math.chalmers.se}


\thanks{The  author was
  partially supported by the Swedish 
  Research Council}

\begin{abstract}
Let $i\colon X\to \Pk^N$ be a  projective manifold of dimension $n$ embedded in projective space $\Pk^N$,
and let $L$ be the pull-back to $X$ of the line bundle $\Ok_{\Pk^N}(1)$.
We construct global explicit Koppelman formulas on $X$ for smooth $(0,*)$-forms with values  in $L^s$ for any $s$.  
The same construction works for singular, even
non-reduced, $X$ of pure dimension, if the sheaves of smooth forms are replaced by
suitable sheaves $\A_X^*$ of $(0,*)$-currents with mild singularities at $X_{sing}$.
In particular, if $s\ge \reg X -1$, 
where $\reg X$ is the Castelnuovo-Mumford regularity, we get an explicit
 representation of the well-known vanishing of $H^{0,q}(X, L^{s-q})$, $q\ge 1$.
Also some other applications are indicated.
  \end{abstract}

\maketitle

\section{Introduction}

During the last decade global Koppelman formulas for $\dbar$ on various special projective varieties have been constructed, see, e.g., \cite{EG1, GSS,Helge, HePo2,HePo3,HePo4}.
 The aim of this paper is to present a quite general explicit construction of 
intrinsic\footnote{Here "intrinsic"  means that the operators in the formula
only depend on intrinsic forms on $X$.}  
Koppelman formulas on any projective, possibly non-reduced, subvariety 
$i\colon X\to \Pk^N$ of pure dimension $n$.

\smallskip
Let us first assume that $X$ is smooth; even in this case such global
formulas are previously known only in case $X$ is (locally) a complete intersection
in $\Pk^N$.   Let $L\to X$ be the restriction of the ample line bundle $\Ok_{\P^N}(1)$ to $X$, and let
 $\E_X^{0,q}(L^r)$ denote the sheaf of smooth $(0,q)$-forms on $X$ with values
in $L^r$.
We introduce integral operators 
$\K\colon\E^{0,q+1}(X,L^s)\to\E^{0,q}(X_{reg},L^s)$ and 
$\Pr\colon\E^{0,q}(X,L^s)\to\E^{0,q}(X,L^s)$
such that the Koppelman formula
\begin{equation}\label{koppelman1}
\phi(z)=\dbar\K\phi+\K(\dbar\phi)+\Pr\phi  
\end{equation}
holds on $X$.
In some situations, see below, we can choose the operators
so that  $\Pr\phi=0$; if $\dbar\phi=0$, then 
$\psi=\K\phi$ is a smooth solution to $\dbar\psi=\phi$ on $X$.
In certain cases we can choose $\Pr$ such that
$\Pr\colon\E^{0,0}(X,L^s)\to\Ok(\P^N,\Ok(s))$. Then $\Pr\phi$
is a holomorphic extension to $\Pk^N$ of a holomorphic section $\phi$ 
of $L^r$ on $X$.
We get no new existence results; the novelty is that we
have explicit formulas for the global solutions and for the holomorphic extensions.
The operators $\K$ are given by  kernels $k(\zeta,z)$ 
that are defined 
on  $X\times X$ and  integrable in $\zeta$ for any $z\in X$. Simply speaking
the operators locally behave like standard integral operators for $\dbar$ in
$\C^n$; in particular they extend to $L^p$-spaces etc and 
all classical local norm estimates hold.

\smallskip
Let us now turn our attention to the case when 
$i\colon X\to \Pk^N$ is a subvariety of pure dimension $n$.  
In case $X$ is reduced, there are well-known definitions of smooth forms and
currents on $X$, cf.,  Section \ref{residue}. 
In \cite{AL} are introduced reasonable definitions of sheaves $\E^{0,*}_X$
of smooth $(0,*)$-forms and suitable sheaves of currents
even in the non-reduced case, see Section \ref{nonred} below.
In \cite{AS1} and \cite{AL} are introduced, by means of local Koppelman formulas,
fine  sheaves $\A^k_X$ (in fact, modules over $\E^{0,*}_X$) of $(0,q)$-currents  on $X$, or any pure-dimensional analytic space, with the following properties:
There are sheaf inclusions $\E_X^{0,q}\subset \A_X^q$ with equality on 
$X_{reg}$, whereas $\A_X^q$  have ``mild'' singularities
at $X_{sing}$, and 
\begin{equation}\label{upp}
0\to\Ok\to\A_X^0\stackrel{\dbar}{\to}\A_X^1\stackrel{\dbar}{\to}\cdots
\end{equation}
is a (fine) resolution of the structure sheaf $\Ok_X$ of holomorphic functions on $X$.
By the abstract de~Rham theorem we therefore have canonical isomorphisms
\begin{equation}\label{isos}
H^k(X,L^s)\simeq \frac{\Ker(\A^k(X,L^s)
\stackrel{\dbar}{\to} \A^{k+1}(X,L^s))}
{\Im (\A^{k-1}(X,L^s))\stackrel{\dbar}{\to} \A^{k}(X, L^s))},\quad k\ge 1.
\end{equation}

In this paper we construct integral operators 
$$
\K\colon\A^{q+1}(X,L^s)\to\A^{q}(X,L^s), \quad 
\Pr\colon\A^{q}(X,L^s)\to\E ^{0,q}(X,L^s)
$$
such that again the Koppelman formula \eqref{koppelman1}
holds on $X$.
In the reduced case,  the operators $\K$ are given by  kernels $k(\zeta,z)$ that are defined 
on  $X_{reg}\times X$, locally integrable in $\zeta$ for $z\in X_{reg}$,
$\Pr$ are given by  kernels $p(\zeta,z)$ that are smooth
on $X_{reg}\times X$, and  the integrals 
$$
\K \phi(z)=\int_X k(\zeta,z)\w\phi(\zeta),  \  z\in X_{reg}, \quad 
\Pr \phi(z)=\int_X p(\zeta,z)\w\phi(\zeta), \   z\in  X,
$$
exist as principal values at $X_{sing}$. For the non-reduced case, 
see Section \ref{nonred}. 
As in the smooth case,  in good situations $\Pr\phi$ vanishes 
so we get explicit solutions to $\dbar\psi=\phi$,  and 
extensions of holomorphic sections from $X$ to $\Pk^N$.

\begin{remark}
It was proved already in \cite{HPo} that if $X$ is a local reduced  
complete intersection
and 
$\phi$ is a smooth $\dbar$-closed form, then there is locally a smooth solution to
$\dbar\psi=\phi$ on $X_{reg}$. The case with a general  $X$ was proved only in \cite{AS0}.
The analogous result for a local non-reduced space is a special case
of the main result in \cite{AL}. 
It is known that in  general there is no solution that is smooth across $X_{sing}$, see,
e.g., \cite[Example 1.1]{AS1}. 
\end{remark}

Let $J_X$ be the homogeneous ideal in the graded ring $S=\C[z_0,\ldots,z_N]$ 
associated with $X$.  Let  $S(-r)$ be the module $S$ but  with the grading shifted by $r$.
There is a free graded resolution 
\begin{equation}\label{kongo}
0\to M_{N_0}\stackrel{a_{N_0}}{\longrightarrow}\ldots
  \stackrel{a_3}{\longrightarrow} M_2     \stackrel{a_2}{\longrightarrow} M_1   
\stackrel{a_1}{\longrightarrow} M_0       
\end{equation}
of the homogeneous module $M_J:=S/J_X$; i.e., 
$$
M_0=S, \quad M_k=S(-d_k^1)\oplus\cdots\oplus S(-d_k^{r_k}),
$$
$a_k=(a_k^{ij})$ are matrices of homogeneous forms
with 
\begin{equation}\label{degar}
\deg a_k^{ij}=d_k^j -d_{k-1}^i
\end{equation}
\eqref{kongo} is exact, and 
the cokernel of the right-most mapping is precisely  $S/J_X$. 
Since $0$ is not an associated prime ideal of $J_X$ it follows from 
\cite[Corollary 20.14]{Eis} that one can
choose \eqref{kongo}  such that  $N_0\le N$.
Our integral  formulas are  explicitly  constructed out of 
a resolution \eqref{kongo}.

Recall that the (Castelnuovo-Mumford)
{\it regularity}  of $X$ is defined as the regularity of the ideal $J_X$ which 
turns out to be $1$ plus the regularity of the module $S/J_X$, so that
$$
\reg X=\max_{k,i} (d_k^i-k) +1,
$$
if \eqref{kongo} is a minimal free resolution of  $S/J_X$, cf.,  \cite[Ch.\ 4]{Eis2}.
It is well-known,  see, e.g., \cite[Proposition~4.16]{Eis2}, 
that    
\begin{equation}\label{eis1}
H^{q}(X,L^{s-q})=0,\  s\ge \reg X-1, \ q\ge 1, 
\end{equation}
and that the natural mapping
\begin{equation}\label{eis2}
\Ok(\P^N,\Ok(s))\to \Ok(X,L^s)
\end{equation}
is surjective for $s\ge\reg X-1$.

\smallskip
In \cite[Example 3.4]{AN} is described an extension operator that provides
an explicit proof of surjectivity of \eqref{eis2}, in case $X$ is reduced.
The non-reduced case is obtained in precisely the same way following the
ideas in \cite{AL}.  
By appropriate choices of  operators $\K$ we can give an explicit
proof of the vanishing of \eqref{eis1}, provided that $X$ is irreducible.
That is, we have

\begin{thm}\label{flicka} 
Let $X$ be a,  possibly non-reduced, irreducible,
subvariety of $\P^N$ of pure dimension $n$ and
assume that $s\ge\reg X-1$.
For each $q\ge 1$  there is an integral
operator $\K\colon \A^{q}(X,L^{s-q})\to\A^{q-1}(X,L^{s-q})$ such that
$\dbar\K\phi=\phi$ if  $\phi\in \A^{q}(X,L^{s-q})$ and  $\dbar\phi=0$.
 \end{thm}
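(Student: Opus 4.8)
The plan is to derive the statement directly from the general Koppelman formula \eqref{koppelman1}, applied with the fixed line bundle power $L^{s-q}$, thereby reducing everything to the vanishing of the projection term. If $\phi\in\A^{q}(X,L^{s-q})$ is $\dbar$-closed, then \eqref{koppelman1} reads
\begin{equation*}
\phi=\dbar\K\phi+\K(\dbar\phi)+\Pr\phi=\dbar\K\phi+\Pr\phi,
\end{equation*}
with $\K\phi\in\A^{q-1}(X,L^{s-q})$ and $\Pr\phi\in\E^{0,q}(X,L^{s-q})$, so that $\dbar\K\phi=\phi$ as soon as $\Pr\phi=0$. Thus the whole theorem amounts to choosing the weight that defines the pair $(\K,\Pr)$ so that $\Pr$ annihilates $\dbar$-closed forms for every $q\ge 1$, under the hypothesis $s\ge\reg X-1$.

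First I would recall how $\K$ and $\Pr$ are manufactured from the resolution \eqref{kongo}. On $\P^N\times\P^N$ one builds a weight $g$ with values in $\End E$ for the bundle $E=\oplus_k E_k$ associated with the modules $M_k$, using the maps $a_k$, Hefer forms $h_k$ representing the differences $a_k(z)-a_k(\zeta)$ and whose bidegrees are dictated by \eqref{degar}, together with a factor carrying a tunable power of the tautological section; the latter supplies a free homogeneity parameter that I would fix in terms of $s$. The operators then arise by pairing the Bochner--Martinelli part of $g$ (for $\K$) and the smooth part of $g$ (for $\Pr$) against $\phi$ and restricting to $X$, as in \cite{AL,AS1}; in the non-reduced case this restriction is interpreted through the sheaves $\A^*_X$.

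The core of the argument is the degree bookkeeping for the kernel $p(\zeta,z)$ of $\Pr$. Its component acting on $(0,q)$-forms is assembled from the level-$q$ data of \eqref{kongo}---the map $a_q$ and the Hefer form $h_q$---and hence carries in the $z$-variable a homogeneity governed by the integers $d_q^i$ through \eqref{degar}. Writing $\reg X-1=\max_{k,i}(d_k^i-k)$, the hypothesis $s\ge\reg X-1$ is precisely what renders $p$ a globally defined kernel and forces its only surviving contribution on a $\dbar$-closed $(0,q)$-form to be a smooth $\dbar$-closed representative of the class $[\phi]\in H^{q}(X,L^{s-q})$. By \eqref{eis1} this group vanishes for $q\ge 1$, so $[\phi]=0$; irreducibility of $X$ enters here to guarantee that this cohomological contribution is the \emph{only} one---e.g.\ that the global sections of negative twists of the ample bundle $L$ that would otherwise appear vanish identically---so that $\Pr\phi$ is zero on the nose rather than merely $\dbar$-exact. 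Hence $\Pr\phi=0$ and $\dbar\K\phi=\phi$. The same parameter choice leaves $\Pr$ free to persist at $q=0$, in accordance with the surjectivity \eqref{eis2}.

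The main obstacle I anticipate is exactly this homogeneity count: one must check that the chosen weight yields a $\Pr$ whose entire image on $(0,q)$-forms with $q\ge1$ is captured by $H^{q}(X,L^{s-q})$, while the companion kernel $k(\zeta,z)$ defining $\K$ stays admissible---locally integrable in $\zeta$ on $X_{reg}$ with principal values existing across $X_{sing}$, and compatible with the currents $\A^*_X$ in the non-reduced case, so that all the local norm estimates of \cite{AL,AS1} survive. Verifying that $s\ge\reg X-1$ is precisely the threshold separating the vanishing range $q\ge1$ from the surviving case $q=0$ is where the real work lies.
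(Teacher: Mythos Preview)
Your argument has a circularity problem and misses the actual mechanism. You invoke \eqref{eis1} to conclude $[\phi]=0$ in $H^q(X,L^{s-q})$, but the whole point of Theorem~\ref{flicka}, as stated just before it, is to give an \emph{explicit} proof of \eqref{eis1}; quoting \eqref{eis1} defeats the purpose. Even setting that aside, from the Koppelman formula you only get $[\Pr\phi]=[\phi]$, and $[\Pr\phi]=0$ would merely say $\Pr\phi$ is $\dbar$-exact, not that $\Pr\phi=0$; your appeal to irreducibility at that point does not repair this.

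The paper's proof works differently. The holomorphic weight $\alpha^\nu$ alone (Proposition~\ref{patron}) already gives $\Pr\phi=0$ for $q\ge 1$ by a pure bidegree count, but only in the range $t\ge \kappa_0-N$, which is weaker than the regularity bound. To reach $t=s-q$ with $s\ge\reg X-1$ one must allow the power $t-\kappa_0+N$ of $\alpha$ to be \emph{negative}, and $\alpha^{-r}$ is singular along the hyperplane $\{h=\bar z\cdot\zeta/|z|^2=0\}$. The paper therefore introduces the cutoff weight $g^\delta=\chi_\delta-\dbar\chi_\delta\wedge B$, cf.\ \eqref{lilian}, which vanishes near $\{h=0\}$, so that $\alpha^{t-\kappa_0+N}\wedge g^\delta$ is a legitimate smooth weight for any $t$. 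This yields approximate operators $\K^\delta,\Pr^\delta$ satisfying \eqref{lar3}, and the task becomes showing $\Pr^\delta\phi\to 0$ and $\K^\delta\phi\to\K\phi$ as $\delta\to 0$.

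For $\Pr^\delta\phi$: since $\phi$ has bidegree $(0,q)$, only components $H_{N-\ell}R_{N-\ell}$ with $\ell\ge q$ contribute, and the degree count via $\kappa_q$ in \eqref{lar1} shows the net power of $\alpha$ on those terms is nonnegative---this is exactly where the regularity hypothesis is spent. Because $q\ge 1$ and the remaining factors are holomorphic in $z$, the required $d\bar z$'s must come from $\dbar\chi_\delta\wedge B$, so one is reduced to proving $\omega\wedge\phi\wedge\dbar\chi_\delta\to 0$. \emph{This} is where irreducibility is actually used: it guarantees $\{h=0\}$ has positive codimension on $X$, so Lemma~\ref{snoklus} applies. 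The convergence $\K^\delta\phi\to\K\phi$ uses $B\wedge B=0$ to kill the $\dbar\chi_\delta\wedge B$ contribution and then the $\W^{X\times\Pk^N}$ machinery from \cite{AS1,AL}.
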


In fact,  for fixed $q$ it is enough that  
$
s\ge\max_{\ell\le N-q} d_\ell^i- (N-q);
$
this follows from the proof below.
%
When $X$ is not irreducible a slightly less sharp version of the theorem still holds, see
Proposition \ref{patron}.  

\smallskip

Koppelman formulas
on $\Pk^n$ were found by G\"otmark, \cite{EG1}, and on more general symmetric spaces in \cite{GSS}.  
In \cite{He} explicit formulas for the
$\dbar$-equation are used on a smooth Riemann surface embedded in $\P^2$,
for $L^1$-estimates.  Similar formulas were also introduced in \cite{Helge},
cf.,  Section \ref{helgesson} below.
Koppelman formulas for global, even non-reduced, complete intersections are 
constructed in
the recent papers \cite{HePo3, HePo4}, cf.,  Section \ref{globcom} below.

As already mentioned, the main novelty in this paper is Koppelman formulas for
an arbitrary embedded projective variety $X$. We think that these formulas
will be of interest even when $X$ is smooth.
We prove Theorem \ref{flicka} as an illustration
of the utility and indicate some other applications in
Section \ref{negcurv}.   We hope that our Koppelman formulas will
be useful for other purposes as well.

\smallskip
In Section \ref{formelsec} we describe, based on \cite{EG1,AG,AN}, how one can obtain
weighted integral formulas on $\Pk^N$.  
We need some elements from residue theory that we have collected in
Section \ref{residue}.  
In Sections \ref{blip1} and \ref {blip2} we then describe the construction of our Koppelman
formulas on a pure-dimensional subvariety. In order to keep the technicalities 
on a reasonable level, we restrict here to the case where $X$ is reduced.
The reader who is mainly interested in the smooth case can just think that
$X_{sing}$ is empty, in that way avoiding a lot of technicalities.
In Section \ref{nonred} we discuss the non-reduced case.

\begin{remark}
The local study of the $\dbar$-equation on non-smooth spaces by $L^2$-methods was initiated
by Pardon and Stern, \cite{P}, \cite{PS1},\cite{PS2},  and has been developed  by a number of 
authors since then, notably Fornaess, Gavosto, Ovrelid, Ruppenthal, 
Vassiliadou, see., e.g.,  \cite{FG}, \cite{FOV1}, \cite{FOV2},  \cite{OR},
\cite{OV1}, \cite{OV2}, \cite{R3}, \cite{R5}.

The equation has also been studied by local and semi-global integral formulas
in, e.g.,   \cite{AS0,AS1}, \cite{HPo},  \cite{RuppZ}, \cite{LR}, \cite{LR2}.    
\end{remark}

\noindent {\bf Acknowledgement.}
The basic idea of this paper was used by P.\ Helgesson already in 2010;  he 
skillfully worked out the details in the special but nontrivial 
case when $X$ is a smooth Riemann surface   
in $\P^2$, \cite{Helge}.
We are grateful to Helgesson for valuable discussions on these matters. 
We also would like to thank the referee for careful reading and pointing out several
mistakes.

\section{Integral  representation on  $\Pk^N$}\label{formelsec}
We first describe how one can generate weighted Koppelman formulas on
$\Pk^N$ for sections of a  holomorphic vector bundle $F\to\Pk^N$.  
This is an adaption of an idea from \cite{A1} to $\Pk^N$, following
\cite{AG,AN};  see also \cite{EG1}.

\smallskip
Let $\pi\colon \C^{N+1}_z\setminus\{0\}\to \P^N_z$ be the natural projection and let $\U\subset\Pk^N$ be an open set. Recall that 
a form $\xi$ in $\pi^{-1}\U\subset \C^{N+1}_z\setminus\{0\}$ is projective, 
i.e., the pull-back of a form $\xi$ in $\U$,
if and only if $\xi$  is homogeneous and $\delta_z\xi=\delta_{\bar z}\xi=0$, where
$\delta_z$ and $\delta_{\bar z}$ are  interior multiplication by
$\sum_0^N z_j(\partial/\partial z_j)$ and its conjugate, respectively. 
We will identify forms in $\U$ by projective forms in $\pi^{-1}\U$.

\smallskip
Let  $\Ok_z(k)$ denote
the pullback of $\Ok(k) \to \Pn_z$ to $\Pn_\zeta\times \Pn_z$ under the 
projection $\Pn_\zeta\times \Pn_z \to \Pn_z$ and define $\Ok_\zeta(k)$
analogously.

\smallskip
\noindent {\it Throughout this paper we will only consider forms and currents that only contain
holomorphic differentials with respect to $\zeta$, whereas anti-holomorphic differentials
with respect to both $z$ and $\zeta$ may occur.  }
\smallskip

Notice that 
$$
\eta = 2 \pi i \sum_0^N z_i \frac{\partial}{\partial\zeta_i}, 
$$
is a section of $\Ok_z(1) \otimes \Ok_\zeta (-1)
\otimes T_{1,0}(\Pn_\zeta)$ on $\Pn_\zeta\times \Pn_z$.  
Contraction $\delta_\eta$ by $\eta$ defines a mapping
$$
\delta_\eta : \Cu^{\ell+1,q}(\Ok_\z(k) \otimes \Ok_z(j)) \to
  \Cu^{\ell,q}(\Ok_\z(k-1) \otimes \Ok_z(j+1)),
$$
where $\Cu^{\ell,q}(\Ok_\z(k) \otimes \Ok_z(j))$ denotes the
sheaf of currents of bidegree $(\ell,q)$ that take  values in $\Ok_\z(k) \otimes \Ok_z(j)$. 
Notice that $\delta_\eta$ only affects holomorphic differentials with respect to
$\zeta$.
Given a vector bundle $L\to\P^n_\zeta\times\P^n_z$, let
$$
\L^{\nu}(L)=\bigoplus_{j}\Cu^{j,j+\nu}(\Ok_\z(j) \otimes \Ok_z(-j)\otimes  L).
$$
If \[
\nabla_\eta = \delta_\eta - \dbar,
\]
then  $\nabla_\eta: \L^{\nu}(L) \to \L^{\nu+1}(L)$ and
$\nabla_\eta^2 = 0$.  
Furthermore, if $L'$ is a line bundle and $\phi, \psi$ are sections of $\L^\nu(L)$ and $\L^{\nu'}(L')$,
respectively, 
then $\phi\w\psi$ is a section of $\L^{\nu+\nu'}(L\otimes L')$, and
$$
\nabla_\eta(\phi\w\psi)=\nabla_\eta\phi\w\psi + (-1)^{\deg\phi}\phi \w \nabla_\eta\psi.
$$
Notice that 
$$
b=\frac{|\zeta|^2\bar z\cdot d\zeta-(\bar z\cdot\zeta)\bar\zeta\cdot d\zeta}
{|\zeta|^2|z|^2-|\bar\zeta\cdot z|^2}
$$
is a $(1,0)$-form on $\P^N_\zeta\times\P^N_z\setminus\Delta$
with values in $\Ok_\z(1) \otimes \Ok_z(-1)$
such that $\delta_\eta b=1$; here $\Delta$ is the diagonal in
$\P^N_\zeta\times\P^N_z$.

\begin{lma}  The form
$$
B=\frac{b}{\nabla_\eta b}=\frac{b}{1-\dbar b}=b+ b\w\dbar b+\cdots +b\w(\dbar b)^{N-1}
$$
is an  integrable section of $\L^{-1}$ and 
\begin{equation}\label{koppar}
\nabla_\eta B=1-[\Delta]_{d\zeta},
\end{equation}
where the last term is the component of the current of integration
$[\Delta]$ that has full degree $N$ in $d\zeta$.
\end{lma}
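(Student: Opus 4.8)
The plan is to view $B$ as a locally integrable current on $\P^N_\zeta\times\P^N_z$ and to compute $\nabla_\eta B$ in two stages: a purely algebraic computation off the diagonal $\Delta$, giving the constant $1$, together with a distributional contribution concentrated on $\Delta$, giving $-[\Delta]_{d\zeta}$. First I would check the bookkeeping. Since $b$ has bidegree $(1,0)$ with values in $\Ok_\z(1)\otimes\Ok_z(-1)$ and $\dbar b$ has bidegree $(1,1)$ with the same twist, the term $b\w(\dbar b)^k$ has bidegree $(k+1,k)$ with values in $\Ok_\z(k+1)\otimes\Ok_z(-(k+1))$, i.e.\ it is precisely the $j=k+1$ summand of $\L^{-1}$. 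Because a projective form cannot carry holomorphic $\zeta$-degree exceeding $N=\dim\P^N_\zeta$, the factor $b\w(\dbar b)^N$ vanishes, so the geometric series $b(1-\dbar b)^{-1}=\sum_{k=0}^{N-1}b\w(\dbar b)^k$ terminates as stated and is a section of $\L^{-1}$ off $\Delta$.

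For integrability I would use that, by the Cauchy--Schwarz inequality, the denominator $|\zeta|^2|z|^2-|\bar\zeta\cdot z|^2$ of $b$ is strictly positive off $\Delta$ and vanishes to second order along $\Delta$; hence $b$ is smooth away from $\Delta$ and singular only there. Working in coordinates transversal to $\Delta$ (of complex dimension $N$), $b$ is modelled on the Bochner--Martinelli form, so the top term $b\w(\dbar b)^{N-1}$ grows like $|\cdot|^{-(2N-1)}$, which is locally integrable over the $2N$ transversal real dimensions, while the lower terms are less singular. This shows $B\in L^1_{\loc}$ and defines it as a current.

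Off $\Delta$ the identity is algebraic. Writing $s:=\nabla_\eta b=1-\dbar b$, the relation $\nabla_\eta^2=0$ gives $\nabla_\eta s=0$, and since $\dbar b$ has even total degree the element $s$ is central, $s^{-1}=\sum_k(\dbar b)^k$ is a genuine (finite) inverse, and differentiating $s\w s^{-1}=1$ yields $\nabla_\eta(s^{-1})=0$. The Leibniz rule then gives, on $\P^N_\zeta\times\P^N_z\setminus\Delta$,
\[
\nabla_\eta B=(\nabla_\eta b)\w s^{-1}-b\w\nabla_\eta(s^{-1})=s\w s^{-1}=1.
\]
To capture the diagonal I would regularize with a cutoff $\chi_\epsilon$ that vanishes near $\Delta$ and equals $1$ outside an $\epsilon$-neighborhood, so that $\chi_\epsilon B$ is smooth. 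Using $\nabla_\eta\chi_\epsilon=-\dbar\chi_\epsilon$ and the identity just proved,
\[
\nabla_\eta(\chi_\epsilon B)=-\dbar\chi_\epsilon\w B+\chi_\epsilon .
\]
Since $\chi_\epsilon B\to B$ in $L^1_{\loc}$ and $\nabla_\eta$ is continuous on currents, letting $\epsilon\to0$ gives $\nabla_\eta B=1-\lim_{\epsilon\to0}\dbar\chi_\epsilon\w B$, so it remains to show $\dbar\chi_\epsilon\w B\to[\Delta]_{d\zeta}$.

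This last limit is the main obstacle. Only the top term $b\w(\dbar b)^{N-1}$, which carries full holomorphic $\zeta$-degree $N$, can survive against $\dbar\chi_\epsilon$ and produce a current of the bidegree of $[\Delta]_{d\zeta}$; the computation localizes to a transversal $\C^N$ on which $b$ is the Bochner--Martinelli form. The crucial point is the normalization: the identity $\delta_\eta b=1$ is exactly what forces the concentrated current to be $[\Delta]$ with multiplicity one rather than an arbitrary multiple, and carrying this through rigorously amounts to reducing $b$ near $\Delta$ to the standard Cauchy--Fantappi\`e--Leray form and invoking the classical Koppelman identity $\dbar(\text{Bochner--Martinelli})=[\text{diagonal}]$.
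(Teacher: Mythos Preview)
Your argument is correct and follows the same route as the paper: both reduce the question to the classical Bochner--Martinelli/Cauchy--Fantappi\`e--Leray situation in affine coordinates near the diagonal. The paper does this in one stroke---it passes to the affinization $\zeta_0\neq 0$, observes that in the affine coordinates $\zeta_j'=\zeta_j/\zeta_0$ the vector field $\eta$ becomes $2\pi i\sum(\zeta_j'-z_j')\partial/\partial\zeta_j'$, records the relevant size estimates on $b$, and then cites \cite[Example~4]{A1} for the conclusion---whereas you unpack that citation by giving the algebraic identity $\nabla_\eta B=1$ off $\Delta$, the cutoff regularization, and the reduction of the limit $\dbar\chi_\epsilon\wedge B\to[\Delta]_{d\zeta}$ to the classical Koppelman identity. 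Your version is therefore more self-contained but not conceptually different; the only thing the paper's shortcut buys is brevity, while yours makes explicit why the normalization $\delta_\eta b=1$ forces the coefficient of $[\Delta]_{d\zeta}$ to be exactly $-1$.
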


\begin{proof}
Let us consider the affinization where $\zeta_0\neq 0$.
In the affine coordinates $\zeta'_j=\zeta_j/\zeta_0$, $j=1, \ldots, N$, and the frame $z_0/\zeta_0$ for 
$\Ok_z(1)\otimes\Ok_\zeta(-1)$, we have
$$
\eta=2\pi i\sum_1^N(\zeta_j'-z_j')\frac{\partial}{\partial\zeta_j'}.
$$
Moreover, it is readily checked that
$|b|\le C |\zeta'-z'|$ and $|\delta_\eta b|\ge C |\zeta'-z'|^2$, and therefore
\eqref{koppar} follows, cf., \cite[Example~4]{A1}. 
\end{proof}

Given a vector bundle $F\to\P^N$, let $F_z$ denote the pullback of $F$  to $\Pn_\zeta\times \Pn_z$ under the
natural projection $\Pn_\zeta\times \Pn_z\to \Pn_z$  
and define  $F_\zeta$  analogously.
A  {\it weight} with respect to $F$  
is a smooth section  $g$ of $\L^{0}(\Hom(F_\zeta,F_z))$ such that $\nabla_\eta g =
  0$ and $g_{0} = I_F$ on the diagonal in $\P^N_\zeta\times\P^N_z$,
where $g_{0}$ denotes the term in $g$ with bidegree $(0,0)$. In general,
we let lower index on a form denote degree with respect to  holomorphic 
differentials of $\zeta$.  
Notice that if $g$ is a weight with respect to $F$, then from \eqref{koppar} we get
\begin{equation}\label{koppar2}
\nabla_\eta(g\w B)=(g-[\Delta]_{d\zeta})I_F.
\end{equation}
Identifying terms of full degree in $d\zeta$ thus
$$
\dbar(g\w B)_N=([\Delta]_{d\zeta}-g_N)I_F.
$$
By Stokes' theorem we get  the following  Koppelman formula,
cf., \cite{EG1} and \cite{GSS}.


\begin{prop} \label{hatsuyuki}
Let $g$ be a weight with respect to $F\to\P^N$.
Then for $\phi\in\E^{0,q}(\P^N,F\otimes\Ok(-N))$ 
we have
\begin{equation}\label{kopp0}
\phi(z)=\dbar_z \int_\zeta (g\w B)_N\w \phi+\int_\zeta(g\w B)_N\w \dbar\phi
+\int_\zeta g_N\w \phi.
\end{equation}
\end{prop}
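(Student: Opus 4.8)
The plan is to integrate the identity $\nabla_\eta(g\w B)=(g-[\Delta]_{d\zeta})I_F$ from \eqref{koppar2} over the $\zeta$-fiber, after pairing against $\phi$, and to extract the component of full degree in $d\zeta$. First I would fix $z$ and regard everything as a current in $\zeta$ on $\P^N_\zeta$. Since $\phi$ is a form in $z$ only (it contains no $d\zeta$, by the standing convention that $\zeta$-differentials are only holomorphic and $\phi\in\E^{0,q}$), the wedge $g\w B\w\phi$ makes sense, and I would isolate the term of bidegree $(N,N)$ in $\zeta$, i.e. the term $(g\w B)_N\w\phi$ together with the correct $\dbar_z$-contribution. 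The key algebraic step is to apply $\nabla_\eta=\delta_\eta-\dbar$ to $g\w B\w\phi$: because $\delta_\eta$ lowers the $d\zeta$-degree, only the $\dbar$-part survives when we look at the component of top holomorphic $\zeta$-degree $N$, so that the $\delta_\eta$-terms contribute nothing to the integral after we select full degree $N$ in $d\zeta$. This is exactly the mechanism already displayed in the line preceding the proposition, $\dbar(g\w B)_N=([\Delta]_{d\zeta}-g_N)I_F$.

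Next I would integrate over $\P^N_\zeta$. The left-hand side, $\int_\zeta \dbar(g\w B)_N\w\phi$, I would split using that $\dbar$ here is $\dbar_\zeta+\dbar_z$: the $\dbar_\zeta$-part integrates to zero by Stokes' theorem on the compact manifold $\P^N_\zeta$ (the kernel $(g\w B)_N$ is integrable in $\zeta$ by the Lemma, and the only singularity is along the diagonal, which is handled as a principal value / by the standard regularization used for $B$), while the $\dbar_z$-part can be pulled outside the $\zeta$-integral to produce the term $\dbar_z\int_\zeta(g\w B)_N\w\phi$. Combining with the Leibniz rule $\dbar\big((g\w B)_N\w\phi\big)=\dbar(g\w B)_N\w\phi\pm(g\w B)_N\w\dbar\phi$ yields the term $\int_\zeta(g\w B)_N\w\dbar\phi$. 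On the right-hand side, the term $\int_\zeta g_N\w\phi$ appears directly, and the diagonal term $\int_\zeta [\Delta]_{d\zeta}\w\phi$ evaluates to $\phi(z)$, since integrating the appropriate component of the current of integration $[\Delta]$ against $\phi$ simply restricts $\phi$ to the diagonal; the twist by $\Ok(-N)$ is precisely what makes the degrees and the $\Ok(N)$ in $(g\w B)_N$ match so that $[\Delta]_{d\zeta}$ reproduces $\phi$ with the right homogeneity.

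The main obstacle, and the point requiring genuine care rather than bookkeeping, is justifying Stokes' theorem across the singularity of $B$ along the diagonal $\Delta$. The form $B$ is merely integrable, not smooth, so one cannot naively apply Stokes; instead I would regularize, e.g. replace $B$ by a smoothed family $B^\epsilon$ (or excise a shrinking tube around $\Delta$) and check that the boundary contributions converge to the diagonal term $[\Delta]_{d\zeta}$ while the error terms vanish as $\epsilon\to 0$. This is exactly where the estimates $|b|\le C|\zeta'-z'|$ and $|\delta_\eta b|\ge C|\zeta'-z'|^2$ from the proof of the Lemma are needed, and it is the analytically substantive part of the argument; the remaining manipulations are degree-counting and the projective/homogeneity conventions set up before the statement.
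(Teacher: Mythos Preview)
Your approach is the same as the paper's: the paper simply says ``By Stokes' theorem'' (citing \cite{EG1,GSS}) after having derived $\dbar(g\w B)_N=([\Delta]_{d\zeta}-g_N)I_F$, and you correctly expand this into wedging with $\phi$, applying Leibniz, and pushing forward over the $\zeta$-fiber, noting that the regularization of $B$ near $\Delta$ is where the analytic content lies.

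One slip worth correcting: in the integrand $\phi$ is pulled back from $\P^N$ via the projection to the $\zeta$-factor, so $\phi=\phi(\zeta)$ depends on $\zeta$, not on $z$ as you wrote. It is true that $\phi$ contains no $d\zeta$ (it is a $(0,q)$-form), so the degree bookkeeping you do is fine, but the Leibniz step produces $\dbar_\zeta\phi$, and the reproduction $\int_\zeta[\Delta]_{d\zeta}\w\phi(\zeta)=\phi(z)$ only makes sense with $\phi$ living on the $\zeta$-side. The cleanest way to phrase the Stokes step is that the pushforward $\pi_*$ along $(\zeta,z)\mapsto z$ commutes with the total $\dbar$; then $\dbar_z\pi_*\big((g\w B)_N\w\phi\big)=\pi_*\dbar\big((g\w B)_N\w\phi\big)$ immediately gives the formula, and no explicit splitting $\dbar=\dbar_\zeta+\dbar_z$ is needed.
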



\begin{ex} \label{kant}
It is easy to check that
\[
\alpha = \alpha_{0,0} + \alpha_{1,1} = \frac{z \cdot \bar \z}{|\z|^2} - \dbar
\frac{\bar \z \cdot d\z}{2 \pi i |\z|^2}
\]
is a projective form in $\L^0(\Ok_\zeta(-1)\oplus\Ok_z(1)))$
such that
\begin{equation}\label{alfasluten}
\nabla_\eta\alpha=0.
\end{equation}
Since $\alpha_{0}$ is equal to $I_{\Ok(1)}$ on the diagonal, $\alpha$ is a weight
with respect to $F=\Ok(1)$.  For each natural number $\rho$ therefore
$g=\alpha^\rho$
is a weight with respect to $F=\Ok(\rho)$. 
For $\ell\ge -N$ we  thus  have the Koppelman formula
\begin{equation}\label{koppleri}
\phi(z)=\dbar_z \int_\zeta (\alpha^{\ell+N}\w B)_N\w \phi+\int_\zeta(\alpha^{\ell+N}\w B)_N\w \dbar\phi
+\int_\zeta (\alpha^{N+\ell})_N\w \phi.
\end{equation}
Since $\alpha$ is holomorphic in $z$ and has no differentials $d\bar z$,
the last term in \eqref{koppleri} vanishes if  $q\ge 1$, 
and for degree reasons also if  $q=0$ and $\ell\le -1$.
\end{ex}

We thus have an explicit proof of the well-known vanishing 
$H^{0,q}(\P^N, \Ok(\ell))=0$
for $\ell\ge -N$ if $1\le q\le N$, and for $q=0$ if $\ell\le -1$.

\begin{remark}\label{uppdrag}
Using  the transposed integral operators 
in Example~\ref{kant},
we get an explicit proof of the vanishing 
$H^{N,q}(\P^N,\Ok(\ell))=0$ for $\ell\le N$, $0\le 1\le N-1$. 
Since $K_{\P^N}=\Ok(-N-1)$ we therefore get that    
$H^{0,q}(\P^N,\Ok(\ell))=0$ for $\ell\le -1$, $0\le q\le N-1$.
\end{remark}

Let us now consider a weight with respect to $\Ok(-1)$.
Let
$$
\tau=\frac{d\bar z\cdot d\zeta}{2\pi i|z|^2}=\frac{ \sum_0^N d\bar z_j\w d\zeta_j}{2\pi i|z|^2}.
$$
Then 
$$
\beta=2\pi i\Big[\delta_\zeta\frac{\delta_{\bar z}\tau}{1-\tau}\Big]=
2\pi i\Big[\frac{\delta_\zeta\delta_{\bar z}\tau}{1-\tau}+
\frac{\delta_\zeta\tau\w\delta_{\bar z}\tau}{(1-\tau)^2}\Big]
$$
is a projective form. More explicitly, 
\begin{equation}\label{palle}
\beta= \frac{\bar z\cdot\zeta}{|z|^2}+\frac{\bar z\cdot\zeta}{|z|^2}\frac{d\bar z\cdot d\zeta}
{2\pi i|z|^2} +\frac{\bar z\cdot d\zeta \w \zeta\cdot d\bar z}{2\pi i |z|^4}+ \cdots,
\end{equation}
so each term has the same degree in $d\zeta$ as in $d\bar z$, and
is holomorphic in $\zeta$.

\begin{prop} 
The form $\beta$ is a weight with respect to
$\Ok(-1)$.
\end{prop}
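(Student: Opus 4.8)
I need to show that the explicitly given form $\beta$ is a weight with respect to $\Ok(-1)$. By the definition stated just before Example~\ref{kant}, this means I must verify three things: that $\beta$ is a smooth section of $\L^0(\Hom((\Ok(-1))_\zeta,(\Ok(-1))_z))$, that $\nabla_\eta\beta=0$, and that the bidegree-$(0,0)$ component $\beta_0$ equals the identity $I_{\Ok(-1)}$ on the diagonal.

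<br>

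Let me think about each piece. The bundle bookkeeping: $\Hom((\Ok(-1))_\zeta,(\Ok(-1))_z) = \Ok_\zeta(1)\otimes\Ok_z(-1)$. The factor $1/|z|^2$ carries weight $\Ok_z(-1)$ (antiholomorphic $\bar z$ in the numerator is balanced against holomorphic $z$), and I need to check that each term of $\beta$ as written in \eqref{palle} indeed lands in $\L^0$, i.e. each summand sits in $\Cu^{j,j}(\Ok_\zeta(j)\otimes\Ok_z(-j)\otimes\Ok_\zeta(1)\otimes\Ok_z(-1))$ for the appropriate $j$ — this is exactly the remark in the text that ``each term has the same degree in $d\zeta$ as in $d\bar z$, and is holomorphic in $\zeta$.'' Since $\tau$ has bidegree $(1,1)$ with one $d\zeta$ and one $d\bar z$ and homogeneity $\Ok_\zeta(1)\otimes\Ok_z(-1)$, and $\delta_\zeta,\delta_{\bar z}$ are the appropriate contractions, the geometric-series expansion keeps everything balanced; I would spell this out degree by degree.

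<br>

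**The main step.** The heart of the matter is $\nabla_\eta\beta=0$, i.e. $(\delta_\eta-\dbar)\beta=0$. Here I expect the cleanest route is to exploit the closed-form expression $\beta=2\pi i\,\delta_\zeta\big[\delta_{\bar z}\tau/(1-\tau)\big]$ rather than the term-by-term series. The plan is to show separately that $\delta_\eta\beta$ and $\dbar\beta$ each vanish, or that they cancel. I would first observe that $\delta_\eta$ (contraction in $\partial/\partial\zeta$) and $\delta_\zeta$ (contraction in $\zeta\cdot\partial/\partial\zeta$, up to constants) essentially commute or anticommute appropriately, and that $\delta_\eta$ applied to $\tau$ is computable directly since $\eta=2\pi i\sum z_i\partial/\partial\zeta_i$ and $\tau=\sum d\bar z_j\w d\zeta_j/(2\pi i|z|^2)$, giving $\delta_\eta\tau = \sum z_j\,d\bar z_j/|z|^2\cdot(\text{sign})$, a $(0,1)$-form. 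The projectivity already established (the text asserts $\beta$ is a projective form, hence $\delta_\zeta\beta=\delta_{\bar\zeta}\beta=0$) is what I would lean on to kill the $\delta_\eta$-type contributions. For the $\dbar$ part I would use that $\tau$ is built from $d\bar z\cdot d\zeta$ and that $\dbar\tau=0$ (it is a constant-coefficient form in the $d\zeta_j$ up to the $1/|z|^2$ factor), so $\dbar$ passes through the geometric series and only hits the scalar factors. I anticipate that the algebra organizes so that $\delta_\eta\beta$ and $\dbar\beta$ produce matching telescoping terms from the $1/(1-\tau)$ expansion that cancel in pairs — this is the computation I would actually carry out, and it is the one genuine obstacle.

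<br>

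**Normalization and conclusion.** Finally, for $\beta_0=I$ on the diagonal: the degree-$(0,0)$ term of \eqref{palle} is $\bar z\cdot\zeta/|z|^2$, which on the diagonal $\zeta=z$ (as homogeneous representatives) becomes $\bar z\cdot z/|z|^2=1$, and under the frame identification for $\Ok_\zeta(1)\otimes\Ok_z(-1)$ this is precisely $I_{\Ok(-1)}$. So the normalization is immediate once the harder vanishing is in hand. I expect the proof to reduce, after the bundle-degree check and the diagonal normalization (both routine), to the single computation $\nabla_\eta\beta=0$, for which the slick path is to differentiate the compact expression $2\pi i\,\delta_\zeta[\delta_{\bar z}\tau/(1-\tau)]$ using $\nabla_\eta=\delta_\eta-\dbar$ together with the Leibniz rule and the already-known relations $\delta_\eta b=1$, $\nabla_\eta^2=0$, and the projectivity of $\beta$.
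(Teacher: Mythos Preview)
Your overall framing is right and matches the paper's: the bundle bookkeeping and the diagonal normalization $\beta_0=\bar z\cdot\zeta/|z|^2=1$ on $\Delta$ are routine, and the heart of the matter is $\nabla_\eta\beta=0$, for which one should work with the compact expression $\beta=2\pi i\,\delta_\zeta W$, $W=\delta_{\bar z}\tau/(1-\tau)$, and use that $\delta_\zeta$ anti-commutes with $\nabla_\eta$. But two things in your plan would derail the computation.

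First, $\dbar\tau\neq 0$. Since $\tau=(2\pi i|z|^2)^{-1}\sum d\bar z_j\wedge d\zeta_j$ and $\dbar$ hits the factor $|z|^{-2}$, one gets
\[
\dbar\tau=-\frac{z\cdot d\bar z}{|z|^2}\wedge\tau=\delta_\eta\tau\wedge\tau,
\]
which is nonzero. So $\dbar$ does not ``pass through the geometric series'' trivially; the $\dbar\tau$ contributions are essential and must be matched against the $\delta_\eta$ contributions. Second, projectivity of $\beta$ means $\delta_\zeta\beta=\delta_{\bar z}\beta=0$, which says nothing about $\delta_\eta\beta$ (contraction by $\sum z_j\partial/\partial\zeta_j$); you cannot lean on projectivity to kill $\delta_\eta$-terms.

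The missing idea is that one should aim not for $\nabla_\eta W=0$ but for $\nabla_\eta W=1$. This is exactly what the paper proves (its equation \eqref{hoppet}): from $\delta_\eta\delta_{\bar z}\tau=1$, $\dbar\delta_{\bar z}\tau=\tau+\delta_\eta\tau\wedge\delta_{\bar z}\tau$, and $\nabla_\eta\tau=\delta_\eta\tau\wedge(1-\tau)$, the quotient-rule computation
\[
\nabla_\eta\frac{\delta_{\bar z}\tau}{1-\tau}
=\frac{\nabla_\eta(\delta_{\bar z}\tau)}{1-\tau}+\frac{\nabla_\eta\tau\wedge\delta_{\bar z}\tau}{(1-\tau)^2}
=\frac{1-\tau-\gamma}{1-\tau}+\frac{(1-\tau)\wedge\gamma}{(1-\tau)^2}=1
\]
(with $\gamma=\delta_\eta\tau\wedge\delta_{\bar z}\tau$) gives the result. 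Then $\nabla_\eta\beta=2\pi i\,\nabla_\eta\delta_\zeta W=-2\pi i\,\delta_\zeta(\nabla_\eta W)=-2\pi i\,\delta_\zeta 1=0$. Once you correct $\dbar\tau$ and target $\nabla_\eta W=1$, your outline becomes the paper's proof.
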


\begin{proof}
Clearly, cf.~\eqref{palle}, $\beta_{0,0}=I_{\Ok(-1)}$ on the diagonal.
We claim that 
\begin{equation}\label{hoppet}
\nabla_\eta \frac{\delta_{\bar z}\tau}{1-\tau}=1.
\end{equation}
Since $\delta_\zeta$ anti-commutes with $\nabla_\eta$ and $\delta_\zeta 1=0$
the proposition follows.
To see \eqref{hoppet},  notice that 
$$
\delta_\eta\tau=-\frac{z\cdot d\bar z}{|z|^2},
\quad
\dbar\tau=\delta_\eta\tau\w\tau, \quad
\delta_\eta\delta_{\bar z}\tau=1,
$$
so that
$$
\dbar\delta_{\bar z}\tau=\tau+\delta_\eta\tau\w\delta_{\bar z}\tau=
\tau+\gamma.
$$
Therefore,
\begin{equation}\label{ole}
\nabla_\eta \delta_{\bar z}\tau=1-\tau-\gamma,
\end{equation}
and
\begin{equation}\label{dole}
\nabla_\eta\tau=\delta_\eta\tau\w(1-\tau),
\end{equation}
so that
\begin{equation}\label{doff}
\nabla_\eta\tau\w\delta_{\bar z}\tau=(1-\tau)\w\gamma.
\end{equation}
It follows that
$$
\nabla_\eta\frac{\delta_{\bar z}\tau}{1-\tau}=
\frac{1-\tau-\gamma}{1-\tau}+\frac{\nabla_\eta\tau\w\delta_{\bar z}\tau}
{(1-\tau)^2}= 1,
$$
in view of \eqref{ole}, \eqref{dole}, and \eqref{doff}.
\end{proof}

Thus for $(0,q)$-forms  $\phi$ with  values in $\Ok(\ell)$, $\ell\le -N$,
we get from Proposition \ref{hatsuyuki}  the Koppelman formula
$$
\phi(z)=\dbar\int_\zeta(B\w\beta^{-N-\ell})_N\w\phi+
\int_\zeta(B\w\beta^{-N-\ell})_N\w\dbar\phi+\int_\zeta (\beta^{-N-\ell})_N\w\phi.
$$
For degree reasons the last term vanishes
if $0\le q\le N-1$, so we get back the well-known vanishing
$H^{0,q}(\P^N,\Ok(\ell))=0$ for $\ell\le -N$. 
In case $q=N$, the obstruction term vanishes when $\ell=-N$,
and when $\ell\le -N-1$   it vanishes if and only if 
\begin{equation}\label{grass}
\int \psi\w \phi=0
\end{equation}
for  each holomorphic $(N,0)$-form with values in $\Ok(-\ell)$. That is,
$\dbar v=\phi$ is solvable if and only if \eqref{grass} holds.
Of course this is
precisely what we get by considering the transposed operators with the weight $\alpha$, 
cf., Remark \ref{uppdrag}.  However, in the non-smooth case we have no obvious
canonical bundle so we cannot consider transposed operators in the same simple way;
therefore this weight $\beta$ will play a role.

\smallskip

For future reference we prove

\begin{prop}\label{stork}
The forms 
$$
\gamma_j=\delta_\zeta\Big[\frac{\delta_{\bar z}\tau}{1-\tau}\w d\zeta_j\Big]
$$
are projective and
\begin{equation}\label{gottis}
\nabla_\eta \gamma_j=\beta z_j-\zeta_j.
\end{equation}
\end{prop}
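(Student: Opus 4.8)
The goal is to prove Proposition \ref{stork}, namely that $\gamma_j=\delta_\zeta\bigl[\tfrac{\delta_{\bar z}\tau}{1-\tau}\w d\zeta_j\bigr]$ is projective and satisfies $\nabla_\eta\gamma_j=\beta z_j-\zeta_j$. My strategy is to leverage the identity \eqref{hoppet} already established in the proof that $\beta$ is a weight, together with the standard interplay between $\delta_\zeta$ and $\nabla_\eta$. The key structural fact is that $\delta_\zeta$ anti-commutes with $\nabla_\eta$, which was used implicitly just above; I would make this explicit and then compute directly.

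First I would set $\sigma=\tfrac{\delta_{\bar z}\tau}{1-\tau}$, so that $\gamma_j=\delta_\zeta[\sigma\w d\zeta_j]$ and, by \eqref{hoppet}, $\nabla_\eta\sigma=1$. Since $\delta_\zeta$ is a contraction by the Euler-type vector field $\sum\zeta_i\partial/\partial\zeta_i$ (which has degree $0$ and acts as an odd operator on holomorphic $\zeta$-differentials), I would record the Leibniz rule $\delta_\zeta(\sigma\w d\zeta_j)=(\delta_\zeta\sigma)\w d\zeta_j+(-1)^{\deg\sigma}\sigma\,\delta_\zeta(d\zeta_j)$, using $\delta_\zeta(d\zeta_j)=\zeta_j$. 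Applying $\nabla_\eta$ and using anti-commutativity $\nabla_\eta\delta_\zeta=-\delta_\zeta\nabla_\eta$, I would obtain
\begin{equation}\label{stork-pf}
\nabla_\eta\gamma_j=-\delta_\zeta\nabla_\eta(\sigma\w d\zeta_j).
\end{equation}
Then I expand $\nabla_\eta(\sigma\w d\zeta_j)=(\nabla_\eta\sigma)\w d\zeta_j+(-1)^{\deg\sigma}\sigma\w\nabla_\eta d\zeta_j$. Here $\nabla_\eta d\zeta_j=\delta_\eta d\zeta_j-\dbar d\zeta_j=2\pi i\,z_j\,\Ok_z(1)\otimes\Ok_\zeta(-1)$-coefficient (i.e.\ $\delta_\eta$ contracts $d\zeta_j$ against $\eta$, giving the scalar tied to $z_j$), while $\dbar d\zeta_j=0$. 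Substituting $\nabla_\eta\sigma=1$ collapses the first piece, and the second produces a term proportional to $\sigma z_j$; reassembling via the definition of $\beta=2\pi i\,\delta_\zeta\sigma$ should yield exactly $\beta z_j-\zeta_j$.

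**Main obstacle.** The delicate part is bookkeeping the signs and the normalization constants $2\pi i$ consistently, since $\beta=2\pi i\,\delta_\zeta\sigma$ carries a factor that must combine correctly with the $2\pi i$ hidden in $\delta_\eta d\zeta_j$ and in $\eta$ itself. I expect the computation to hinge on verifying that $\delta_\zeta(\sigma)$ reproduces $\beta/(2\pi i)$ and that the contraction $\delta_\zeta\bigl(\sigma\,\delta_\eta d\zeta_j\bigr)$ yields the $\beta z_j$ term with the right sign, while the $\delta_\zeta(\delta_\zeta(\cdots))=0$-type vanishing (from $\delta_\zeta^2=0$) and the term $\delta_\zeta(d\zeta_j)=\zeta_j$ together produce the $-\zeta_j$ summand. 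For projectivity, I would separately check homogeneity and that $\delta_z\gamma_j=\delta_{\bar z}\gamma_j=0$; since $\sigma$ is built from the projective form $\tau$ and projectivity is preserved under $\delta_\zeta$ and wedging with the globally defined $d\zeta_j$ (interpreted as a section of $\Ok_\zeta(1)\otimes T^{1,0}$), this should follow from the projectivity of $\tau$ established earlier, modulo confirming that the $d\zeta_j$ factor does not spoil the conditions once one passes to homogeneous coordinates.
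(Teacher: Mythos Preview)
Your approach is correct and essentially identical to the paper's: compute $\nabla_\eta(\sigma\wedge d\zeta_j)=d\zeta_j-2\pi i\, z_j\,\sigma$ from \eqref{hoppet} and the Leibniz rule, then apply $-\delta_\zeta$ using the anti-commutation of $\delta_\zeta$ with $\nabla_\eta$ together with $\beta=2\pi i\,\delta_\zeta\sigma$. One small slip: for projectivity you need $\delta_\zeta\gamma_j=0$ (immediate from $\delta_\zeta^2=0$) and $\delta_{\bar z}\gamma_j=0$, not $\delta_z\gamma_j$; the paper simply records $\delta_\zeta\gamma_j=0=\delta_{\bar z}\gamma_j$ and proceeds.
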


\begin{proof}
Clearly $\delta_\zeta \gamma_j=0=\delta_{\bar z}\gamma_j$  and thus 
$\gamma_j$ is a projective form.
By \eqref{hoppet} we have that 
$$
\nabla_\eta\Big[\frac{\delta_{\bar z}\tau}{1-\tau}\w d\zeta_j\Big]=
d\zeta_j-2\pi i\frac{\delta_{\bar z}\tau}{1-\tau}z_j.
$$
Since $\nabla_\eta$ and $\delta_\zeta$ anti-commute, 
the proposition follows.
\end{proof}

\section{Some preliminaries}\label{residue}

Let $X$ be any reduced analytic space of pure dimension $n$. By definition there is, locally, some embedding
$i\colon X\to \Omega\subset\C^N$.  Let $\J_X\subset \Ok_\Omega$ 
be the ideal sheaf of holomorphic functions in $\Omega$ that vanish on $X$. Then
the sheaf of holomorphic functions on $X$, the structure sheaf $\Ok_X$, is represented
as $\Ok_X=\Ok_\Omega/\J_X$.
If $\Phi$ is a smooth form in $\Omega$
we say that $\Phi$ is in $\Kers i^*$ if  $i^*\Phi$ vanishes on $X_{reg}$.
We define the sheaf 
$$
\E_X^{p,*}=\E_\Omega^{p,*}/\Kers i^*.
$$
of smooth forms on $X$, and have
a natural mapping $i^*\colon \E_\Omega^{p,*}\to \E_X^{p,*}.$
One can prove that $\E_X^{p,*}$ so defined is independent of the choice
of embedding and is thus an intrinsic sheaf on $X$.
We define the sheaf  $\Cu_X^{p,*}$ of currents as the dual of
$\E_X^{n-p,n-*}$. More concretely this means that currents $\tau$ in 
 $\Cu_X^{p,*}$ are
identified with  currents $i_*\tau$ in  $\Cu^{p+N-n,*+N-n}_\Omega$ such that
$i_*\tau$ vanish on $\Kers i^*$ so that $\tau.i^*\Phi=i_*\tau.\Phi$ for test forms
$\Phi$.  
Clearly $\dbar$ is defined on smooth forms
and extends to currents by duality. Also the wedge product $\phi\w\tau$ is well-defined
as long as at least one of the factors is smooth. Thus the currents form a
module over the smooth forms.

\smallskip

We say that a current in $\C^M_s$  of the form
$$
\frac{\gamma}{s_1^{a_1}\cdots s_r^{a_r}}\dbar\frac{1}{s_{r+1}^{a_{r+1}}}\w\cdots
\dbar\frac{1}{s_{r'}^{a_{r'}}},
$$
where $\gamma$ is a test form, is {\it elementary}. 
A current $\tau$ on $X$ is {\it pseudomeromorphic} if locally it is
a finite sum of direct images under holomorphic mappings of elementary currents;
see, e.g., \cite{AW3} for a precise definition and basic properties.  
The \pmm\  currents form a sheaf $\PM_X$ that is closed under multiplication by $\E_X^{p,*}$ and  the action of $\dbar$.  
Given a \pmm\ current $\tau$ in an open set $\U$ and a subvariety $V\subset \U$, the
natural restriction of $\tau$ to $\U\setminus V$ has a canonical  extension to
a \pmm\ current $\1_{\U\setminus V}\tau$  such that
$$
\1_V\tau:=\tau-\1_{\U\setminus V}\tau
$$
has support on $V$.  If $\xi$ is a smooth form, then
\begin{equation}\label{pm3}
\xi \w \1_V\tau=\1_V \xi\w\tau.
\end{equation}
Let $\chi$ be a smooth function on $[0,\infty)$ that is $0$ in a \nbh of $0$ and
$1$ in a \nbh of $\infty$ and let $h$ be a tuple of holomorphic functions,
or a section of some holomorphic Hermitian vector bundle such that
the zero set of $h$ is precisely $V$. Then
\begin{equation}\label{pm0}
\1_{\U\setminus V}\tau=\lim_{\delta\to 0} \chi(|h|/\delta)\tau.
\end{equation}

We say that a current $a$ in $X$ is {\it almost semi-meromorphic}, $a\in ASM(X)$, if
there is a smooth modification $\pi\colon X'\to X$, a generically
nonvanishing holomorphic section $\sigma$
of a line bundle $L\to X'$ and a smooth $L$-valued form $\gamma$ such that
$$
a=\pi_*(\gamma/\sigma).
$$
Let $ZSS(a)$ be the smallest analytic subset of $X$ such that
$a$ is smooth in $X\setminus ZSS(a)$. It follows that $ZSS(a)$ has positive codimension.
Clearly an almost semi-meromorphic $a$ is \pmm.

\begin{prop}[Theorem 4.8 in \cite{AW3}]\label{pm1}
Given any \pmm\ $\tau$ and $a\in ASM(X)$ the current
$a\w\tau$ a priori defined in $X\setminus ZSS(a)$ has a unique \pmm\ extension
to a \pmm\ current in $X$, also denoted $a\w\tau$, such that 
$\1_{ZSS(a)} a\w\tau =0$.  
\end{prop}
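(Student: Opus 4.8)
The plan is to \emph{define} the extension by a regularized limit, to read off the characterizing property $\1_{ZSS(a)}(a\w\tau)=0$ almost immediately from that definition, and then to spend the real effort proving that the limit exists and is \pmm\ by resolving to an explicit model. Write $V=ZSS(a)$; recall that $a$ is smooth on $X\setminus V$ and that $V$ has positive codimension. Choose a holomorphic tuple (or section) $h$ with $\{h=0\}=V$ and a cutoff $\chi$ as in \eqref{pm0}. For each $\delta>0$ the form $\chi(|h|/\delta)\,a$ extends smoothly across $V$ (by $0$), so
\[
T_\delta:=\big(\chi(|h|/\delta)\,a\big)\w\tau
\]
is a genuine \pmm\ current, being a smooth form times a \pmm\ current. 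I would then \emph{define} $a\w\tau:=\lim_{\delta\to0}T_\delta$, once this limit is shown to exist and be independent of the auxiliary data.

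Granting existence, the two remaining assertions are easy. For the characterizing property, multiply the definition by the fixed smooth function $\chi(|h|/\epsilon)$: since $\chi(|h|/\epsilon)\chi(|h|/\delta)=\chi(|h|/\epsilon)$ as functions once $\delta$ is small relative to $\epsilon$, continuity of multiplication by a fixed smooth function gives $\chi(|h|/\epsilon)\,(a\w\tau)=\big(\chi(|h|/\epsilon)a\big)\w\tau$; letting $\epsilon\to0$ and invoking \eqref{pm0} yields $\1_{X\setminus V}(a\w\tau)=\lim_\epsilon\big(\chi(|h|/\epsilon)a\big)\w\tau=a\w\tau$, that is, $\1_V(a\w\tau)=0$. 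For uniqueness, if $T_1,T_2$ are two \pmm\ extensions of the product on $X\setminus V$ with $\1_VT_i=0$, then $W:=T_1-T_2$ is \pmm\ and supported on $V$, so $\1_{X\setminus V}W=0$, whence $W=\1_VW=\1_VT_1-\1_VT_2=0$ by linearity of $\tau\mapsto\1_V\tau$. Independence of $\chi$ and $h$ follows because two such regularizations differ, before passing to the limit, by currents tending to $0$, again via the one-variable estimates used below.

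The crux is the existence of $\lim_{\delta\to0}T_\delta$ and its pseudomeromorphicity, which is a local statement, so I would fix a point and work in a chart. Using the \pmm\ structure of $\tau$ — locally a finite sum of direct images $\mu_*\nu$ of elementary currents $\nu$ — together with the representation $a=\pi_*(\gamma/\sigma)$, I would pass to a single smooth modification dominating $\pi$ and all the maps $\mu$, on which the pullbacks of $h$, of $\sigma$, and of the denominators occurring in the various $\nu$ are all monomials in local coordinates (by Hironaka), and I would use a projection formula to transport the product $T_\delta$ upstairs into an honest semi-meromorphic-times-elementary product to which \eqref{pm3} and the direct-image formalism apply. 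In this normal-crossings model the computation factors variable by variable, and convergence reduces to the single-variable facts that $\lim_\delta\chi(|s|/\delta)\,s^{-k}$ and $\lim_\delta\chi(|s|/\delta)\,\dbar(s^{-k})$ exist as the principal-value and residue currents $\mathrm{pv}(s^{-k})$ and $\dbar\,\mathrm{pv}(s^{-k})$; pushing these limits forward preserves pseudomeromorphicity by definition, and summing gives that $a\w\tau$ is \pmm. The main obstacle is precisely this reduction: choosing one modification that simultaneously monomializes $a$ and the data of $\tau$, and justifying the projection formula that turns the product downstairs into a product upstairs governed by \eqref{pm3}, so that the model computation and the invariance under the auxiliary choices both go through cleanly.
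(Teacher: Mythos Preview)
The paper does not prove this proposition at all: it is quoted verbatim as Theorem~4.8 of \cite{AW3}, so there is no ``paper's own proof'' to compare your attempt against. Your sketch is, in outline, the standard argument that appears in \cite{AW3}: define the extension by $a\w\tau:=\lim_{\delta\to 0}\chi(|h|/\delta)\,a\w\tau$, read off $\1_{ZSS(a)}(a\w\tau)=0$ and uniqueness from the definition, and prove existence and pseudomeromorphicity by passing to a resolution where everything is monomial and the limit factors into one-variable principal-value computations.

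One technical point you should sharpen. You write that you would ``pass to a single smooth modification dominating $\pi$ and all the maps $\mu$''. The maps $\mu$ occurring in the local description $\tau=\sum\mu_*\nu$ are arbitrary holomorphic maps into $X$, not modifications of $X$, so they cannot be ``dominated'' in the sense you suggest. The correct manoeuvre is to work summand by summand: for each $\mu\colon Y\to X$ with $\mu(Y)\not\subset V$, pull the smooth form $\chi(|h|/\delta)\,a$ back to $Y$ (where it is again smooth), use the projection formula to write $T_\delta$ as $\mu_*\big(\mu^*(\chi_\delta a)\w\nu\big)$, observe that $\mu^*a$ is almost semi-meromorphic on $Y$ (via the fibre product of $\mu$ with the modification $\pi\colon X'\to X$ defining $a$), and only then resolve on $Y$ to reach the monomial model. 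The degenerate case $\mu(Y)\subset V$ is handled separately: then $\mu_*\nu$ has support in $V$, and the extension with $\1_V(\cdot)=0$ forces the product to vanish. You correctly identify this reduction as ``the main obstacle'', but as written your description of it is slightly off.
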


Pseudomeromorphic currents have some important geometric properties,
see, e.g., \cite{AW3}:

\begin{prop}\label{pm2}
Assume that the  \pmm\ current $\tau$ has support on a germ of an analytic variety $V$.

\smallskip
\noindent (i) If  the holomorphic function $h$ vanishes on $V$, then $\bar h \tau=0$ and $d\bar h\w \tau=0$.  

\noindent (ii)  If $\tau$ has bidegree $(*,p)$ and $V$ has codimension $\ge p+1$, then $\tau=0$. 
 \end{prop}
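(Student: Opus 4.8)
The statement is local, so I may work on an open set $\U\subset\C^M$ and use the defining structure of $\PM_\U$: each current is locally a finite sum of direct images $\pi_*\tau'$ of elementary currents under holomorphic maps. For (i) set $V=\{h=0\}$; since $\supp\tau\subseteq V$ and $\chi(|h|/\delta)$ vanishes in a \nbh of $V$, formula \eqref{pm0} gives $\1_{\U\setminus V}\tau=0$, i.e. $\tau=\1_V\tau$. Thus (i) will follow from the purely local claim that, for \emph{every} \pmm\ current $\mu$ on $\U$,
$$
\bar h\,\1_V\mu=0\qquad\text{and}\qquad d\bar h\w\1_V\mu=0 .
$$
Note that $\bar h\,\1_V\mu=\1_V(\bar h\mu)$ by \eqref{pm3}, so both sides are automatically supported on $V$; the content is the vanishing itself, which is not visible from the $\1_V$-formalism alone and must come from the structure theorem.

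My plan for this local claim is to reduce to a model computation. Writing $\mu=\sum_j\pi_{j*}\tau'_j$ and using that $\1_V$ commutes with direct images, I get $\1_V\mu=\sum_j\pi_{j*}\,\1_{\{h\circ\pi_j=0\}}\tau'_j$, so the possible cancellation between the pieces $\pi_{j*}\tau'_j$ is circumvented: each summand is now a genuine residue-type current along $\{h\circ\pi_j=0\}$. After a Hironaka modification adapted to $h$ I may assume $h\circ\pi_j$ is a monomial times a nonvanishing factor, and the projection formula gives $\bar h\,\pi_{j*}(\cdots)=\pi_{j*}(\overline{h\circ\pi_j}\,(\cdots))$ and likewise $d\bar h\w\pi_{j*}(\cdots)=\pi_{j*}(\overline{d(h\circ\pi_j)}\w(\cdots))$. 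Everything is thereby pushed down to elementary currents, where the two identities I need are
$$
\bar s\,\dbar\frac{1}{s^a}=0,\qquad d\bar s\w\dbar\frac{1}{s^a}=0 .
$$
The first holds because $\dbar(1/s^a)$ is a constant times $\partial_s^{a-1}(\delta_0\,d\bar s)$, while $\bar s$ commutes with $\partial_s$ and $\bar s\,\delta_0=0$; the second holds because $\dbar(1/s^a)$ already carries the differential $d\bar s$. The step I expect to be the real obstacle is the bookkeeping that converts ``$h$ vanishes on the support'' into divisibility by a residue variable: at a generic point of the support $\{s_{r+1}=\cdots=s_{r'}=0\}$ of $\1_{\{h\circ\pi=0\}}\tau'$ the monomial $h\circ\pi$ vanishes only if one of the residue variables $s_{j_0}$ occurs in it, and then $\overline{h\circ\pi}$ (resp. every term of $\overline{d(h\circ\pi)}$) carries a factor $\bar s_{j_0}$ or, in the borderline exponent-one case, a factor $d\bar s_{j_0}$, so one of the two model identities applies.

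Granting (i), part (ii) is formal. I argue by induction on $\dim V$ and work near a smooth point of $V$, choosing local coordinates $z$ with $V\subset\{z_1=\cdots=z_k=0\}$ and $k=\codim V\ge p+1$. Each $z_i$ with $i\le k$ is holomorphic and vanishes on $V$, so (i) yields $d\bar z_i\w\tau=0$. Decomposing $\tau=d\bar z_i\w\alpha+\beta$ with $\beta$ free of $d\bar z_i$, the relation $d\bar z_i\w\beta=0$ forces $\beta=0$, i.e. every term of $\tau$ contains $d\bar z_i$; applying this for all $i\le k$ shows every term is divisible by $d\bar z_1\w\cdots\w d\bar z_k$. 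This is impossible for a current of anti-holomorphic degree $p<k$ unless $\tau=0$, so $\tau$ vanishes on $V_{reg}$. Since $\supp\tau\subseteq V$, the support then lies in $V_{sing}$, which has strictly smaller dimension, and the induction closes.
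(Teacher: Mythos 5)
Your argument is correct, but note that the paper offers no proof of Proposition \ref{pm2} at all --- it simply cites \cite{AW3} --- so the relevant comparison is with that source, and your route is essentially the standard one taken there: reduce via \eqref{pm0} to $\tau=\1_{\{h=0\}}\tau$, commute $\1$ with the direct images in the structure decomposition so the support condition becomes visible term by term, monomialize, and push everything onto the one-variable identities $\bar s\,\dbar(1/s^a)=0$ and $d\bar s\w\dbar(1/s^a)=0$; likewise your deduction of (ii) from (i) --- $d\bar z_i\w\tau=0$ forces every term of $\tau$ to contain $d\bar z_1\w\cdots\w d\bar z_k$, contradicting anti-degree $p<k$, then induction over $\dim V$ via $V_{sing}$ --- is the classical argument. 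Two steps you pass over quickly are exactly where the cited calculus carries the load. First, ``after a Hironaka modification I may assume $h\circ\pi_j$ is a monomial times a unit'' presupposes that an elementary current is itself the direct image, under the chosen modification $\varpi$, of a \pmm\ current upstairs, so that $\pi_{j*}\tau'_j=(\pi_j\circ\varpi)_*(\cdots)$ stays within the definition; this closure under further blow-ups is proved in \cite{AW3} by the $|\cdot|^{2\lambda}$-regularization and is not a formality. Second, your generic-point dichotomy needs, when the monomial contains no residue variable, the explicit lemma that $\1_{\{s_i=0\}}\tau'_j=0$ for principal-value and smooth variables $s_i$ (no mass concentrates on such hyperplanes), since $\1_{\{h\circ\pi_j=0\}}\tau'_j$ may be supported on a proper subvariety of $\{s_{r+1}=\cdots=s_{r'}=0\}$ on which the monomial vanishes for non-residue reasons; with that lemma the decomposition of $\{h\circ\pi_j=0\}$ into the hyperplanes $\{s_i=0\}$ with $\alpha_i>0$ goes through and your two model identities finish both assertions of (i).
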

We refer to (ii) as the {\it dimension principle}.



\section{A structure form associated to $X$}\label{blip1}

Let $i\colon X\to \Pk^N$ be a reduced subvariety 
of pure dimension $n$, and let 
\eqref{kongo} be  a free graded resolution of the $S$-module $M_X=S/J_X$.
In particular, then $a_1=(a^{11}_1,\ldots, a^{1 r_1}_1)$ is a tuple of 
homogeneous forms that define the homogeneous ideal $J_X$
in the graded ring $S=\C[z_0,\ldots,z_N]$.  
Let $E_{k}^j$ be disjoint trivial line bundles over $\P^N$ 
with basis elements $e_{k,j}$   and let
$$
E_k=\big(E^1_k\otimes \Ok(-d^1_k)\big)\oplus\cdots 
\oplus \big(E^{r_k}_k\otimes \Ok(-d_k^{r_k})\big), \quad  E_0\simeq\C.
$$
Then 
\begin{equation}\label{ecomplex}
0\to E_{N_0}\stackrel{a_{N_0}}{\longrightarrow}\ldots\stackrel{a_3}{\longrightarrow} 
E_2\stackrel{a_2}{\longrightarrow}
E_1\stackrel{a_1}{\longrightarrow}E_0\to 0
\end{equation}
is a complex of vector bundles over $\P^N$
that is pointwise exact outside $Z$,  and  the corresponding complex
of locally free sheaves 
\begin{equation}\label{pulla}
0\to \Ok(E_{N_0})\stackrel{a_{N_0}}{\longrightarrow}\ldots\stackrel{a_3}{\longrightarrow} 
\Ok(E_2)\stackrel{a_2}{\longrightarrow}
\Ok(E_1)\stackrel{a_1}{\longrightarrow}\Ok(E_0)
\end{equation}
over $\P^N$ is a resolution of  the sheaf $\Ok(E_0)/\J_X$,
where $\J_X\subset\Ok_{\Pk^N}$ is the ideal sheaf associated with $X$.
See, e.g.,  \cite[Section~6]{AW1}.
We equip $E_k$ with the natural  Hermitian metric
$$
|\xi(z)|^2_{E_k}=\sum_{j=1}^{r_k}|\xi_j(z)|^2 |z|^{2d^j_k}
$$
if  $\xi=(\xi_1,\ldots,\xi_{r_k})$, so that \eqref{ecomplex} becomes
a Hermitian complex. 
In \cite{AW1} were  introduced  \pmm\  currents 
$$
U=U_1+\ldots +U_{N_0}, \quad R=R_1+\ldots+ R_{N_0}
$$
on $\Pk^N$ associated to \eqref{ecomplex} with the following properties:
The currents $U_k$ are almost semi-meromorphic $(0,k-1)$-currents, smooth outside $X$,
that take values in $\Hom(E_0, E_k)\simeq E_k$,  and
$R_k$ are  $(0,k)$-currents with support on $X$, taking values in 
$\Hom(E_0, E_k)\simeq E_k$. Moreover, we have the relations
\begin{equation}\label{alban}
a_1 U_1=I_{E_0}, \quad  a_{k+1}U_{k+1}-\dbar U_k= -R_k,\ k\ge 1,
\end{equation}
which can be compactly written as
\begin{equation}\label{compact}
\nabla_a U=I_{E_0}-R
\end{equation}
if 
$$
\nabla_a=a-\dbar=a_1+a_2+\cdots a_{N_0}-\dbar.
$$
If $\Phi$ is  a  section of $\Ok=\Ok(E_0)$, then 
the current $R\Phi$ vanishes  if and only if  $\Phi$ is 
in  $\J_X$, see \cite[Theorem~1.1]{AW1}.

Let $X_k$ be the analytic subset of $\Pk^N$ where $a_k$ does not have optimal rank.
Then 
\begin{equation}\label{pater}
\cdots X_{k+1}\subset X_k\dots X_{N-n+1}\subset X_{sing}
\subset X=X_{N-n}=\cdots =X_1.
\end{equation}
Since $\J_X$ has pure dimension
\begin{equation}\label{pust}
\codim X_k\ge k+1, \quad k\ge N-n+1,
\end{equation}
 see \cite[Corollary 20.14]{Eis}.

By the dimension principle
$R_k=0$ for $k<N-n$. Moreover, there are almost semi-meromorphic
$\Hom(E_k,E_{k+1})$-valued $(0,1)$-currents $\alpha_{k+1}$, smooth outside
$X_{k+1}$,  such that
$$
R_{k+1}=\alpha_{k+1}R_k
$$
there.  By \eqref{pust} and the dimension principle it follows that this equality must hold 
across $X_{k+1}$ if the right hand side is interpreted in the sense of Proposition \ref{pm1}.
By a simple induction argument, using \eqref{pust} and the dimension
principle,  it follows that 
\begin{equation}\label{pm4}
\1_{X_{sing}} R=0.
 \end{equation}

\begin{lma}\label{pucko}
If $\Phi$ is a smooth
$(0,*)$-form, then $i^*\Phi=0$ on $X_{reg}$ if and only if 
$\Phi R=0$.
\end{lma}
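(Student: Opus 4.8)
The plan is to reduce the statement to a local computation near the smooth points of $X$, built on the single observation that $\Phi R$ depends only on the pullback $i^*\Phi$. Fix $x_0\in X_{reg}$ and choose local holomorphic coordinates $(w,\zeta)=(w_1,\dots,w_p,\zeta_1,\dots,\zeta_n)$ with $p=N-n$, so that $X=\{w=0\}$ and $\J_X=(w_1,\dots,w_p)$ near $x_0$. Writing a smooth $(0,*)$-form as $\Phi=\sum_{I,J}\Phi_{I,J}\,d\bar w_I\w d\bar\zeta_J$, I would first establish that
\[
\Phi R=\Big(\sum_J\Phi_{\emptyset,J}(0,\zeta)\,d\bar\zeta_J\Big)\w R
\]
in a \nbh of $x_0$. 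Every term with $I\neq\emptyset$ carries a factor $d\bar w_k$, and since $w_k$ vanishes on $X=\supp R$ we have $d\bar w_k\w R=0$ by Proposition \ref{pm2}(i); thus such terms drop out. For $I=\emptyset$ I would expand $\Phi_{\emptyset,J}(w,\zeta)=\Phi_{\emptyset,J}(0,\zeta)+\sum_k w_k g_k+\sum_k\bar w_k h_k$ with $g_k,h_k$ smooth, and discard the last two sums using $\bar w_k R=0$ (again Proposition \ref{pm2}(i)) and $w_kR=0$, the latter since $w_k\in\J_X$ by the duality theorem \cite[Theorem~1.1]{AW1}. As $\sum_J\Phi_{\emptyset,J}(0,\zeta)\,d\bar\zeta_J$ is a $w$-independent representative of $i^*\Phi$, the displayed identity shows that $\Phi R=0$ near $x_0$ is equivalent to $(i^*\Phi)\w R=0$ there.

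The ``only if'' direction is then immediate: if $i^*\Phi=0$ on $X_{reg}$ the identity gives $\Phi R=0$ on $\Pk^N\setminus X_{sing}$, i.e.\ $\1_{\Pk^N\setminus X_{sing}}\Phi R=0$, so $\Phi R=\1_{X_{sing}}\Phi R=\Phi\w\1_{X_{sing}}R$ by \eqref{pm3}, and the right-hand side vanishes by \eqref{pm4}.

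For the ``if'' direction, which is the crux, I would analyse $R$ on $X_{reg}$. The components $R_k$ with $k>N-n$ are supported on $X_k\subset X_{sing}$, cf.\ \eqref{pater}, so on $\Pk^N\setminus X_{sing}$ only $R_{N-n}$ survives; moreover $\dbar R_{N-n}=-a_{N-n+1}R_{N-n+1}=0$ there by \eqref{alban}. Hence $R_{N-n}$ is a $\dbar$-closed \pmm\ $(0,p)$-current, supported on the smooth submanifold $\{w=0\}$ and annihilated by $\bar w_k$ and $d\bar w_k$; it is therefore a Coleff--Herrera current and admits a normal form
\[
R_{N-n}=\sum_{\alpha}c_\alpha(\zeta)\,\dbar\frac{1}{w_1^{\alpha_1+1}}\w\cdots\w\dbar\frac{1}{w_p^{\alpha_p+1}}
\]
(a finite, $E_{N-n}$-valued sum), where $\dbar$-closedness forces the coefficients $c_\alpha$ to be holomorphic. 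By the duality theorem applied to the section $1\notin\J_X$ we get $R_{N-n}\neq0$ near $x_0$, so some $c_\alpha\not\equiv0$. If now $\Phi R=0$, the first paragraph gives $\big(\sum_J c'_J\,d\bar\zeta_J\big)\w R_{N-n}=0$ with $c'_J=\Phi_{\emptyset,J}(0,\cdot)$; separating the distinct pole orders forces $c'_J c_\alpha\equiv0$ for all $J,\alpha$, and since the nonzero $c_\alpha$ are holomorphic this yields $c'_J\equiv0$, i.e.\ $i^*\Phi=0$ near $x_0$. As $x_0\in X_{reg}$ was arbitrary, $i^*\Phi=0$ on all of $X_{reg}$.

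The main obstacle is this last step: one must know that on $X_{reg}$ the current $R$ collapses to the single $\dbar$-closed term $R_{N-n}$ and that this term is a \emph{nonvanishing} Coleff--Herrera current, so that exterior multiplication by it is injective on pure $d\bar\zeta$ forms. This rests on the support statement $X_k\subset X_{sing}$ for $k>N-n$ together with \eqref{alban}, on the structure theorem for Coleff--Herrera currents, and on the duality theorem \cite[Theorem~1.1]{AW1}; the remaining bookkeeping (the Taylor expansion and the separation of pole orders) is routine.
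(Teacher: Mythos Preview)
Your proof is correct and follows the same route as the paper's: local coordinates at a regular point, Taylor expansion of $\Phi$ using $w_jR=\bar w_jR=d\bar w_j\wedge R=0$, and then globalization via \eqref{pm3} and \eqref{pm4}. The paper's proof is much terser---after the Taylor expansion it simply asserts ``we find that $R\Phi=0$ if and only if $i^*\Phi=0$''---whereas you unpack the nontrivial implication $\Phi R=0\Rightarrow i^*\Phi=0$ by reducing to $R_{N-n}$, writing it in Coleff--Herrera normal form, and invoking the duality theorem for nonvanishing. That is exactly the content behind the paper's one-line claim.

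One point to tighten: your reduction to $R_{N-n}$ rests on the vanishing of $R_k$ on $X_{reg}$ for $k>N-n$ (which you then use to get $\dbar R_{N-n}=0$ and hence holomorphic coefficients). The chain \eqref{pater} only gives $X_k\subset X_{sing}$; it does not by itself yield $\supp R_k\subset X_k$, and \eqref{alban} does not supply the missing step. What you actually need is the result from \cite{AW1} that near a Cohen--Macaulay point the higher residue components vanish (equivalently, the resolution locally splits as the Koszul complex plus a split-exact tail beyond level $N-n$, and the latter contributes nothing to $R$). With that citation in place your argument is complete. Incidentally, in the reduced case the annihilation $w_jR_{N-n}=0$ already forces the normal form to have the single term $\alpha=0$, so your sum over $\alpha$ collapses.
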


It follows that $R\phi$ is well-defined for $\phi$ in $\E_X^{0,*}$.   

\begin{proof}
Locally at a point $x\in X_{reg}$ we can choose coordinates $(z,w)$ such that
$X=\{w=0\}$.  By a Taylor expansion of $\Phi$ in $w$, using that $w_j R=\bar w_j R=
d\bar w_j\w R=0$, cf., Proposition \ref{pm2} (i), we find that
that $R\Phi=0$ if and only if $i^*\Phi=0$.  If $\Phi R=0$ on $X_{reg}$
it follows from \eqref{pm3} and \eqref{pm4} that $\Phi R=0$
identically.  
\end{proof}

Let 
$
\varOmega =\delta_z (dz_0\w\ldots\w dz_N)
$
be the unique, up to a multiplicative constant,
non-vanishing global $(N,0)$-form with values in $\Ok(N+1)$.
From \cite[Proposition 3.3]{AS1} we have
\begin{prop}  There is a unique almost
semi-meromorphic   current 
$\omega = \omega_0+\cdots +\omega_n$
on  $X$ that is smooth on $X_{reg}$,
 $\omega_\ell$ have bidegree $(n,\ell)$ and take values in $E^\ell:=i^*E_{N-n+\ell}$,
and 
\begin{equation}\label{deffo}
i_*\omega=\varOmega\w R, \quad  i_*\omega_\ell=\varOmega\w R_{N-n+\ell}.
\end{equation}
\end{prop}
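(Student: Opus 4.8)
The plan is to \emph{define} $\omega$ through the asserted identity and then to check that this identity genuinely determines an intrinsic current on $X$ of the stated type. By the dimension principle (Proposition \ref{pm2}(ii)) together with \eqref{pater} we already know $R_k=0$ for $k<N-n$, so the only components of $\varOmega\w R$ that can occur have bidegree $(N,N-n+\ell)$ with $0\le\ell\le n$; these match the bidegree $(n,\ell)$ of a current on $X$ after the shift by $(N-n,N-n)$ built into $i_*$ (the fixed $\Ok(N+1)$-twist carried by $\varOmega$ being absorbed into the value bundle). The essential point is that $\varOmega\w R$ lies in the image of $i_*$, i.e.\ that it vanishes on $\Kers i^*$. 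Indeed, for a smooth test form $\Phi$ with $i^*\Phi=0$ on $X_{reg}$ the pairing $(\varOmega\w R).\Phi$ equals, up to sign, $\int \varOmega\w(\Phi R)$, and $\Phi R=0$ by Lemma \ref{pucko}; hence the pairing vanishes. Therefore $\omega_\ell.i^*\Phi:=(\varOmega\w R_{N-n+\ell}).\Phi$ is a well-defined functional on $\E_X^{n,n-\ell}$, and summing gives $\omega=\omega_0+\cdots+\omega_n$ with $i_*\omega_\ell=\varOmega\w R_{N-n+\ell}$. Uniqueness is then immediate: $i_*$ is injective on currents on $X$ (since $i^*$ is onto $\E_X$), so the identity $i_*\omega=\varOmega\w R$ pins $\omega$ down completely.

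Next I would check smoothness on $X_{reg}$. By \eqref{pater} we have $X_k\subset X_{sing}$ for $k>N-n$, so on $X_{reg}$ the map $a_k$ has optimal rank for every $k>N-n$ and hence $R_k=0$ there for such $k$; thus only $\omega_0$ survives on $X_{reg}$. Working in local coordinates $(z,w)$ with $X=\{w=0\}$, the space is near $X_{reg}$ a complete intersection and $R_{N-n}$ is the corresponding Coleff--Herrera product (times a holomorphic frame of $E_{N-n}$). Consequently $\varOmega\w R_{N-n}$ collapses to integration over $X$ against a smooth $(n,0)$-form, which exhibits $\omega=\omega_0$ as smooth on $X_{reg}$.

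The main obstacle is the global statement that $\omega$ is almost semi-meromorphic across $X_{sing}$, since $\varOmega\w R$ is itself a genuine residue current on $\P^N$ and is \emph{not} almost semi-meromorphic there; the almost semi-meromorphicity must appear only after collapsing onto the $n$-dimensional space $X$. I would handle this by passing to a smooth modification $\pi\colon X'\to X$ chosen so that the pull-backs of the Fitting ideals of the complex \eqref{ecomplex}, together with the zero set governing the Poincar\'e--Leray residue computed above, become principal with normal-crossings support. On $X'$ the relevant currents admit explicit semi-meromorphic representations: using $R_{k+1}=\alpha_{k+1}R_k$ with $\alpha_{k+1}\in ASM$ and the relations \eqref{alban}, the pull-back of the datum defining $\omega$ can be written as $\gamma/\sigma$ for a smooth form $\gamma$ and a holomorphic section $\sigma$ of a line bundle on $X'$. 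Pushing forward and invoking Proposition \ref{pm1} (to interpret the products and to verify that $\1_{ZSS(\omega)}$ behaves correctly) then yields $\omega=\pi_*(\gamma/\sigma)\in ASM(X)$, smooth off a proper analytic subset. I expect this modification step --- reconciling the residue-theoretic description of $\varOmega\w R$ on $\P^N$ with a semi-meromorphic description of $\omega$ on $X$ --- to be the technical heart of the argument, exactly as in \cite[Proposition 3.3]{AS1}.
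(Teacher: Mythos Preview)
The paper does not prove this proposition at all; it is quoted verbatim from \cite[Proposition~3.3]{AS1}. Your sketch is in effect a reconstruction of the argument found there: existence and uniqueness of $\omega$ as a current on $X$ follow from Lemma~\ref{pucko} exactly as you indicate, and the almost semi-meromorphic property is obtained by passing to a modification of $X$ on which the relevant ideals become principal. So your overall strategy matches the source the paper defers to.

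One step should be corrected, however. It is not true that ``only $\omega_0$ survives on $X_{reg}$''. The inclusion $X_{k}\subset X_{sing}$ for $k>N-n$ in \eqref{pater} tells you that the almost semi-meromorphic forms $\alpha_{k+1}$ are \emph{smooth} on $X_{reg}$, not that $R_{k+1}=\alpha_{k+1}R_k$ vanishes there; optimal rank of $a_k$ governs the singularities of $U$ and of the $\alpha$'s, not the support of $R$. What one actually obtains on $X_{reg}$ is $\omega_\ell=\alpha_{N-n+\ell}\cdots\alpha_{N-n+1}\,\omega_0$ with smooth $\alpha$'s, so once $\omega_0$ is smooth via your Poincar\'e--Leray/Coleff--Herrera argument, all of $\omega$ is smooth on $X_{reg}$. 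The conclusion you want still follows; only the intermediate vanishing claim is wrong.
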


We say that $\omega$ is a {\it structure form} on $X$.
%
%
%
For any smooth form $\xi$ on $\Pk^N$  there is a unique
form  $\sko(\xi)$ such that 
\begin{equation}\label{0skunk}
\sko(\xi)\w\varOmega=\xi_{N},
\end{equation}
where $\xi_{N}$ denotes  the components of $\xi$ of bidegree $(N,*)$.
From \eqref{deffo} and  \eqref{0skunk}  we have that
\begin{equation}\label{skunk}
\xi_{N}\w R=\sko(\xi)\w\varOmega\w R=i_*\big(\sko(\xi)\w\omega\big),
\end{equation}
where we in the last term, for simplicity, write $\sko(\xi)\w\omega$ rather then
$i^*\sko(\xi)\w\omega$.

\begin{lma}\label{snoklus}
Let $\chi(t)$ be a smooth function as in \eqref{pm0}. 
If $h$ is a holomorphic
section of a Hermitian vector bundle that
does not vanish identically  on any irreducible component of
$X$ and $\chi_\delta=\chi(|h|/\delta)$, then 
\begin{equation}\label{apfot}
\chi_\delta\omega\to \omega, \quad 
\dbar\chi_\delta\w \omega\to 0.
\end{equation}
\end{lma}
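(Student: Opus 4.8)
The plan is to establish the two convergences in \eqref{apfot} by reducing everything to properties of the pseudomeromorphic current $R$ via the pushforward identities \eqref{deffo}, and then exploiting the multiplication rule \eqref{pm0} together with the fact that $R$ has support on $X$. Since $\omega$ is almost semi-meromorphic and $i_*\omega=\varOmega\w R$, it suffices to prove the corresponding statements after applying $i_*$, because $i_*$ is injective on currents on $X$ (currents on $X$ are by definition identified with their pushforwards). Thus I would reformulate the goal as showing
\begin{equation*}
\chi_\delta\,\varOmega\w R\to \varOmega\w R, \qquad \dbar\chi_\delta\w\varOmega\w R\to 0,
\end{equation*}
where I may freely move the smooth factor $\varOmega$ past $\chi_\delta$ and $\dbar\chi_\delta$.

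For the first convergence, the key point is that the hypothesis on $h$ guarantees that its zero set $V=\{h=0\}$ does not contain any irreducible component of $X$, hence $V\cap X$ has positive codimension in $X$. By \eqref{pm0}, $\chi_\delta\,\tau\to\1_{\U\setminus V}\tau$ for any pseudomeromorphic $\tau$, so $\chi_\delta\,R\to\1_{X\setminus V}R$. The crucial step is then to argue that $\1_V R=0$: since $R$ has support on $X$ and $V\cap X$ has positive codimension, no component of the support of $R$ is swallowed by $V$, and one can invoke the dimension principle (Proposition \ref{pm2}) applied componentwise, or more directly the structure of $R$ as built from $R_{N-n}$ via the almost semi-meromorphic operators $\alpha_k$, to conclude that the part of $R$ supported on $V$ must vanish. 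Therefore $\chi_\delta\,R\to R$, and multiplying by the smooth form $\varOmega$ gives the first limit.

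For the second convergence, I would write $\dbar\chi_\delta\w R = \dbar(\chi_\delta R) - \chi_\delta\,\dbar R$. The second term converges to $\dbar R$ by the first part (applied to the pseudomeromorphic current $\dbar R$, noting that $\dbar R$ also has support on $X$), while $\dbar(\chi_\delta R)\to\dbar R$ by continuity of $\dbar$ together with the first convergence $\chi_\delta R\to R$. Hence $\dbar\chi_\delta\w R\to\dbar R-\dbar R=0$, and again wedging with the smooth $\varOmega$ preserves the limit.

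The main obstacle I anticipate is the justification that $\1_V R = 0$, i.e.\ that the restriction of the pseudomeromorphic current $R$ to the zero locus of $h$ carries no mass. This is where the irreducibility-type hypothesis on $h$ enters decisively: it is precisely what prevents $V$ from containing a whole component of $X$ on which $R$ lives, and it is the reason the statement would fail for an arbitrary $h$. I expect the cleanest route is to combine \eqref{pm3}, the support property \eqref{pm4}, and the dimension principle exactly as in the proof of Lemma \ref{pucko}, rather than to manipulate the explicit almost semi-meromorphic structure of $\omega$ directly; reducing to $R$ at the outset is what makes the argument short.
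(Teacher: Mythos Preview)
Your approach is essentially the paper's, executed with $R$ on $\P^N$ rather than with $\omega$ on $X$. The paper shows $\1_W\omega=0$ by splitting $X$ into $X_{reg}$, where $\omega$ is smooth so the restriction to $W$ is trivially zero, and $X_{sing}$, where $\1_{X_{sing}}\omega=0$ follows from \eqref{pm4}. This is shorter than your inductive dimension-principle argument on the $R_k$ via $R_{k+1}=\alpha_{k+1}R_k$, though both routes are valid.

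There is a gap in your derivation of the second limit. To conclude $\chi_\delta\,\dbar R\to\dbar R$ you need $\1_V\dbar R=0$, and ``$\dbar R$ also has support on $X$'' does not give this: your own argument for $\1_V R=0$ used the specific inductive structure of $R$, not merely its support. The missing ingredient is precisely $(a-\dbar)R=0$ (a consequence of \eqref{compact}), which yields $\dbar R_k=-a_{k+1}R_{k+1}$ and hence $\1_V\dbar R=-a\,\1_V R=0$ by \eqref{pm3}. The paper packages this more cleanly: apply $a-\dbar$ to the first limit $\chi_\delta\omega\to\omega$ and use $(a-\dbar)\omega=0$; since $(a-\dbar)(\chi_\delta\omega)=\chi_\delta(a-\dbar)\omega-\dbar\chi_\delta\w\omega=-\dbar\chi_\delta\w\omega$, the second assertion of \eqref{apfot} follows in one line.
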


\begin{proof} 
Let $W$ be the zero set of $h$. 
Notice that $i_* \1_{X_{sing}} \omega=\1_{X_{sing}}i_*\omega=
\1_{X_{sing}} \varOmega\w R=\varOmega\w \1_{X_{sing}} R=0$ in view of
Lemma \ref{pucko}.  Thus $\1_{X_{sing}}\omega=0$, and hence $\1_W
\1_{X_{sing}}\omega=0$.
Since $\omega$ is smooth on $X_{reg}$ we have that 
$\1_W\1_{X_{reg}}\omega=0$. 
By simple computational rules, see, e.g., \cite{AW3}, we conclude that 
$\1_W \omega=0$.  In view of \eqref{pm0} 
thus the first part of \eqref{apfot} follows.  Notice that
\eqref{compact}  implies that $(a-\dbar)R=0$, 
and by  \eqref{deffo} thus $(a-\dbar)\omega=0$. 
Applying $(a-\dbar)$ to the first limit  in \eqref{apfot} now
the second one follows.
\end{proof}

\section{Koppelman formulas on a projective  variety}\label{blip2}

Let  $U^\lambda=|a_1|^{2\lambda} U$ and 
$R^\lambda=1-|a_1|^{2\lambda}+\dbar|a_1|^{2\lambda}\w U$.
Then $R^\lambda$ and $U^\lambda$ are as smooth as we may wish if
$\Re \lambda$ is sufficiently large. In particular,
$R^\lambda$ and $U^\lambda$ are well defined currents. Moreover, 
they admit analytic continuations to $\Re\lambda>-\epsilon$, and the
values at $\lambda=0$ are precisely $R$ and $U$, respectively, see \cite{AW1}.

Let $\rho$ be an integer.
Following  \cite[Definition 1]{AG} we say that  
$H= (H_k^\ell)$  is a Hefer morphism for  the complex $E_\bullet\otimes\Ok(\rho),a$, cf., \eqref{ecomplex}, if 
$H_k^\ell$ are smooth sections of
$$
\L^{-k+\ell}(\Hom(E_{\zeta,k}\otimes\Ok_\zeta(\rho),E_{z,\ell}\otimes\Ok_z(\rho))),
$$
$H_k^\ell=0$ for $k<\ell$, the term $(H_\ell^\ell)_{0}$ of bidegree $(0,0)$
is the identity $I_{E_\ell}$ on $\Delta$, and
\begin{equation}\label{hrel}
\nabla_\eta H_k^\ell =H_{k-1}^\ell a_k -a_{\ell+1}(z) H_k^{\ell+1}, 
\end{equation}
where $a_k$ stands for  $a_k(\zeta)$.
From \cite[Lemma 2.5]{AN} we have
\begin{lma} Assume that $H$ is a Hefer morphism for the complex 
$E_\bullet\otimes\Ok(\rho), a$.
For $\Re\lambda\gg 0$, the form 
\begin{equation}\label{gformel}
g^\lambda=a_1(z)H^1U^\lambda+H^0R^\lambda
\end{equation}
is a weight with respect to $\Ok(\rho)$. 
\end{lma}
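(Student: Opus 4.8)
The plan is to verify the two defining conditions of a weight with respect to $\Ok(\rho)$: that $\nabla_\eta g^\lambda = 0$ and that the bidegree $(0,0)$-term of $g^\lambda$ equals $I_{\Ok(\rho)}$ on the diagonal $\Delta$. These are exactly the requirements recorded just before Proposition \ref{hatsuyuki}, so once they are checked the lemma follows. The whole computation lives in the $\nabla_\eta$-calculus set up in Section \ref{formelsec}, and the key inputs are the Hefer relations \eqref{hrel} and the smoothed resolution identity $\nabla_a U^\lambda = I_{E_0} - R^\lambda$, which is the $\lambda$-regularized version of \eqref{compact}.

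First I would compute $\nabla_\eta g^\lambda$ directly from \eqref{gformel}. Since $a_1(z)$ is holomorphic in $z$ (and carries no $\zeta$-differentials), $\nabla_\eta$ passes it, and using the Leibniz rule for $\nabla_\eta$ together with \eqref{hrel} I would expand $\nabla_\eta(a_1(z)H^1 U^\lambda)$ and $\nabla_\eta(H^0 R^\lambda)$ separately. The point is that $\nabla_\eta$ acting on the $H_k^\ell$ produces the combination $H_{k-1}^\ell a_k - a_{\ell+1}(z)H_k^{\ell+1}$, while $\nabla_\eta = \delta_\eta - \dbar$ acting on $U^\lambda$ and $R^\lambda$ must be related to the operator $\nabla_a = a - \dbar$ appearing in \eqref{compact}. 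The crucial observation is that $\delta_\eta$ reproduces the algebraic action of the $a_k(\zeta)$ on the $E$-bundles, so that on the weight-forms $\nabla_\eta$ and $\nabla_a$ combine compatibly; I would substitute $\nabla_a U^\lambda = I_{E_0}-R^\lambda$ and $\nabla_a R^\lambda = 0$ to collapse the expansion. After assembling the terms, the Hefer relations are designed precisely so that all the mixed $H_{k-1}^\ell a_k$ and $a_{\ell+1}(z)H_k^{\ell+1}$ contributions telescope and cancel against the contributions coming from $\nabla_a U^\lambda$ and $\nabla_a R^\lambda$, leaving $\nabla_\eta g^\lambda = 0$. This telescoping is the computational heart of the argument.

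Next I would check the normalization. On $\Delta$ the only surviving bidegree $(0,0)$ contribution must come from the term involving $H^0 R^\lambda$ (together with the identity piece of $R^\lambda$), because $U^\lambda$ has positive form-degree in $\zeta$ and hence contributes nothing of bidegree $(0,0)$. Using that $a_1 U_1 = I_{E_0}$ from \eqref{alban} and that $(H^\ell_\ell)_0 = I_{E_\ell}$ on $\Delta$ by the definition of a Hefer morphism, I would identify $(g^\lambda)_{0,0}|_\Delta = I_{\Ok(\rho)}$.

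The main obstacle I anticipate is bookkeeping rather than conceptual difficulty: one must track the interplay of three gradings simultaneously---the homological degree $k$ in the complex $E_\bullet$, the form-bidegree, and the $\Ok(\cdot)$-twist---and be careful about the signs produced by $\nabla_\eta$ acting through the odd-degree factors $H_k^\ell$ and $U^\lambda$. In particular one must confirm that the degree shifts in \eqref{hrel} match those in the definition of the $E_k$ (the twists $d_k^j$) so that every term in the telescoping sum indeed lands in the same bundle $\Hom(\Ok_\zeta(\rho),\Ok_z(\rho))$, consistent with $g^\lambda$ being a weight with respect to $\Ok(\rho)$. Since everything is assembled from smooth forms for $\Re\lambda \gg 0$, no analytic subtlety arises at this stage; the identity $\nabla_\eta g^\lambda = 0$ is an algebraic consequence of \eqref{hrel} and \eqref{compact}, and it will persist under analytic continuation to $\lambda=0$, though for the present lemma only $\Re\lambda \gg 0$ is asserted.
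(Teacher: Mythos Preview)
The paper does not give a proof here; it simply cites \cite[Lemma 2.5]{AN}. Your overall strategy---use the Hefer relation \eqref{hrel} together with the regularized identity $\nabla_a U^\lambda=I_{E_0}-R^\lambda$ and telescope---is indeed the right one, and is what is carried out in \cite{AN}. However, two steps in your outline are incorrect as written.

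First, the mechanism you describe for how $a_k(\zeta)$ enters is wrong. It is not true that ``$\delta_\eta$ reproduces the algebraic action of the $a_k(\zeta)$ on the $E$-bundles''. The currents $U_k^\lambda$ and $R_k^\lambda$ have bidegree $(0,k-1)$ and $(0,k)$ respectively, hence carry no holomorphic $d\zeta$-differentials, so $\delta_\eta U^\lambda=\delta_\eta R^\lambda=0$ and $\nabla_\eta$ acts on them simply as $-\dbar$. The morphisms $a_k(\zeta)$ enter only through the Hefer relation \eqref{hrel} applied to $H_k^\ell$: expanding $\nabla_\eta(H_k^\ell\, U_k^\lambda)$ by Leibniz gives $(H_{k-1}^\ell a_k - a_{\ell+1}(z)H_k^{\ell+1})U_k^\lambda \pm H_k^\ell\,\dbar U_k^\lambda$, and it is the combination $a_k U_k^\lambda-\dbar U_{k-1}^\lambda$ (from adjacent $k$) that hooks into $\nabla_a U^\lambda$. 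You should also use $a_1(z)a_2(z)=0$ to kill the $a_2(z)H_k^2$ terms.

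Second, your normalization argument is off. The component $U_1^\lambda$ has bidegree $(0,0)$, so the term $a_1(z)H^1U^\lambda$ \emph{does} contribute to $(g^\lambda)_{0}$; your claim that ``$U^\lambda$ has positive form-degree in $\zeta$'' is false for $k=1$. On the diagonal one gets, using $(H_1^1)_0=I_{E_1}$, $(H_0^0)_0=I_{E_0}$, and $a_1U_1=I_{E_0}$,
\[
(g^\lambda)_{0}\big|_\Delta
= a_1(\zeta)\,|a_1|^{2\lambda}U_1 + \big(1-|a_1|^{2\lambda}\big)
= |a_1|^{2\lambda}+1-|a_1|^{2\lambda}=I_{E_0},
\]
which is the required identity on $\Ok(\rho)$. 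Both summands are needed.
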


Here $H^1U=\sum_j H^1_jU_j$ and $H^0R=\sum_j H^0_j R_j$.

\begin{prop}\label{basbox}
Let $F$ be holomorphic vector bundle over $\Pk^N$ and
let   $g$ be   a weight with respect to  $F$. 
Moreover, assume that $H$ is a Hefer morphism for $E_\bullet\otimes L^\rho$.
For $\phi\in\E^{0,q}(X, F\otimes L^{\rho-N})$ we have the Koppelman formula
\begin{multline}\label{kopp1}
\phi(z)=\dbar_z \int_\zeta (HR\w g\w B)_N\w \phi+\int_\zeta(HR\w g\w B)_N\w \dbar\phi
+ \\
\int_\zeta (HR\w g)_N\w \phi, \quad z\in X_{reg}.
\end{multline}
\end{prop}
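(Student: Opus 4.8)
The plan is to combine the two structural ingredients that have been assembled: the scalar Koppelman formula on $\P^N$ coming from the weight $g$ and the form $B$ (Proposition \ref{hatsuyuki}), and the current identity $\nabla_a U = I_{E_0} - R$ (equation \eqref{compact}) together with the structure form $\omega$ that lets us ``restrict'' currents with support on $X$ to intrinsic objects on $X$. The key observation is that the combined kernel $HR \w g \w B$, after multiplication by a regularizing factor $|a_1|^{2\lambda}$ and analytic continuation to $\lambda = 0$, should satisfy a $\nabla_\eta$-type identity that, upon taking the component of full degree $N$ in $d\zeta$ and integrating against $\phi$, produces exactly \eqref{kopp1}.

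\textbf{The main steps.} First I would form the regularized kernel built from $U^\lambda, R^\lambda$ and compute $\nabla_\eta(H R^\lambda \w g \w B)$ using the Leibniz rule for $\nabla_\eta$ stated after the definition of $\nabla_\eta$, the weight property $\nabla_\eta g = 0$, the Hefer relations \eqref{hrel}, and $\nabla_\eta B = 1 - [\Delta]_{d\zeta}$ from \eqref{koppar}. The Hefer relations \eqref{hrel} are precisely designed so that, when $H$ is contracted against the complex $(E_\bullet, a)$ and paired with $U, R$ via $\nabla_a U = I_{E_0} - R$, the $a_k$-terms telescope; this is the mechanism by which $HR \w g$ plays the role of the obstruction term and $HR \w g \w B$ the role of the solution kernel. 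Concretely, I expect that after analytic continuation to $\lambda = 0$ one obtains an identity of the shape
\begin{equation*}
\nabla_\eta(HR\w g\w B) = HR\w g - [\Delta]_{d\zeta}\, I_F
\end{equation*}
(at least in the relevant degrees), where the $[\Delta]_{d\zeta}$ term reproduces $\phi(z)$ after integration by the reproducing property of the diagonal.

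\textbf{Extracting the intrinsic formula.} Next I would take the component of full degree $N$ in $d\zeta$ and read off the $\dbar$ and $[\Delta]$ pieces, exactly as in the passage from \eqref{koppar2} to Proposition \ref{hatsuyuki}. The crucial point specific to $X$ is that $R$ (hence $HR \w g \w B$) has support on $X$, so by \eqref{skunk} the integral over $\zeta \in \P^N$ collapses to an integral over $X$ against the structure form $\omega$; this is where $i_* \omega = \varOmega \w R$ is used to make the kernels intrinsic. One then applies Stokes' theorem to move $\dbar_z$ outside the integral, restricting to $z \in X_{reg}$ so that the currents are smooth in $z$ and all manipulations are legitimate.

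\textbf{The main obstacle.} The hard part will be justifying the analytic continuation in $\lambda$ and the passage to $\lambda = 0$ simultaneously with the restriction to $X$: one must verify that the currents $U^\lambda, R^\lambda$ have well-defined values at $\lambda = 0$ \emph{after} being paired with the singular structure form $\omega$ and integrated in $\zeta$, and that $\dbar_z$ commutes with $\int_\zeta$ and with the $\lambda \to 0$ limit. The delicate regularity issue is that $\omega$ is only almost semi-meromorphic, smooth merely on $X_{reg}$, so the products $HR \w g \w B$ must be interpreted via Proposition \ref{pm1} and the principal-value/$\1_{X_{sing}}$ machinery of Section \ref{residue}; Lemma \ref{snoklus} (that $\chi_\delta \omega \to \omega$ and $\dbar\chi_\delta \w \omega \to 0$) is exactly the tool needed to show the boundary contributions at $X_{sing}$ vanish when Stokes' theorem is applied. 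Controlling these singular contributions, rather than the essentially formal $\nabla_\eta$-computation, is where the real work lies.
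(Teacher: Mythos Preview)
Your strategy is broadly right, but the paper's argument is considerably more economical and you are missing its central shortcut. The lemma immediately preceding Proposition~\ref{basbox} already tells you that for $\Re\lambda\gg 0$ the full expression
\[
g^\lambda = a_1(z)H^1 U^\lambda + H^0 R^\lambda
\]
is a weight with respect to $\Ok(\rho)$. The paper simply multiplies this by the given weight $g$ to obtain a weight $g^\lambda\w g$ for $\Ok(\rho)\otimes F$, applies Proposition~\ref{hatsuyuki} verbatim, and then observes that for $z\in X$ one has $a_1(z)=0$, so the $a_1(z)H^1U^\lambda$ term drops out and only $H^0R^\lambda=HR^\lambda$ survives. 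No fresh $\nabla_\eta$-computation is needed; the Hefer telescoping you describe is exactly what has already been packaged into that lemma. Your plan to compute $\nabla_\eta(HR^\lambda\w g\w B)$ directly would amount to re-deriving that lemma, and note that $HR^\lambda$ alone is \emph{not} $\nabla_\eta$-closed off $X$---you would have to carry the $a_1(z)H^1U^\lambda$ term along until the very end.

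On your ``main obstacle'': you have somewhat misidentified the tool. Lemma~\ref{snoklus} is not used here; it enters later, in the proof of Theorem~\ref{flicka}, to control a different limit ($\delta\to 0$ in a cutoff, not $\lambda\to 0$). For the passage $\lambda\to 0$ in this proposition the paper simply invokes \cite[Lemma~5.2]{AS1} (and \cite[Lemma~9.5]{AL}), together with the observation that the product with $B$ can itself be defined as the value at $\lambda=0$ of $|\eta|^{2\lambda}B\w\tau$; no separate boundary analysis at $X_{sing}$ via $\chi_\delta$ is performed. Finally, the intrinsic reformulation via the structure form $\omega$ and \eqref{skunk} is not part of the proof of the proposition itself---it is stated afterwards, in \eqref{kopp11} and \eqref{valp}, as an interpretation of the already-proven formula.
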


By blowing up $\Pk^N\times \Pk^N$ 
along the diagonal one can verify that $B$ is almost semi-meromorphic. 
In view of Proposition \ref{pm1},
thus $(HR\w g\w B)_N\w \phi$ is a well-defined \pmm\  current on $\Pk^N\times \Pk^N$. Formally   \eqref{kopp1} means that
$$
\phi=\dbar \pi_*((HR\w g\w B)_N\w \phi)+\pi_*((HR\w g\w B)_N\w \dbar\phi)
+\pi_*((HR\w g)_N\w \phi),
$$
where $\pi$ is the projection $(\zeta,z)\mapsto z$.

\begin{proof}
Since $g^\lambda \w g$ is a weight with respect to $\Ok(\rho)\otimes F$, 
and $a_1(z)=0$ when $z\in X$, 
from Proposition \ref{hatsuyuki} we get \eqref{kopp1} with $R^\lambda$ instead of
$R$.   In view of (the proof of) \cite[Lemma 5.2]{AS1}, see also 
\cite[Lemma 9.5]{AL},  we can take $\lambda=0$ and so we get
the proposition, keeping in mind that 
the product $B\w \tau$ can be defined as the value
of $|\eta|^{2\lambda}B\w\tau$ at $\lambda=0$, in view of \cite[(2.2) and (2.3)]{AS1}.
\end{proof}

Let 
\begin{equation}\label{kloka}
\K\phi=\pi_*((HR\w g\w B)_N\w \phi), \quad \Pr\phi=\pi_*((HR\w g)_N\w \phi).
\end{equation}
Then we can write \eqref{kopp1} as 
\begin{equation}\label{kopp2}
 \phi=\dbar \K\phi +\K\dbar\phi +\Pr\phi
\end{equation}
for $z\in X_{reg}$.

In view of \eqref{skunk} we have that
\begin{equation}\label{kopp11}
\K \phi(z)=\int_X k(\zeta,z)\w\phi(\zeta), \quad
\Pr \phi(z)= \int_X p(\zeta,z)\w\phi(\zeta),
\end{equation}
where
\begin{equation}\label{valp}
k=\pm\sko (g\w B\w H)\w\omega,  \quad p=\pm\sko (g\w H)\w\omega.
\end{equation}
It is apparent from \eqref{kopp11} that $\K$ and $\Pr$ are intrinsic integral operators on
$X$. 
Locally they are precisely of the type in
\cite{AS1}, so it follows that $\K\phi$ is smooth on $X_{reg}$ if $\phi$ is smooth.
Moreover,  from \cite[Theorem 1.4]{AS1} we get: 
%
\begin{thm}\label{mojje}
Let $F$ be holomorphic vector bundle over $\Pk^N$ and
let   $g$ be   a weight with respect to  $F$ on  $X$.
Moreover, assume that $H$ is a Hefer morphism for $E_\bullet\otimes\Ok(\rho)$
and $\K$ and $\Pr$ are defined by \eqref{kopp11}. Then  
\begin{multline*}
\K\colon \A^{k+1}(X,E\otimes L^{\rho-N})\to \A^k(X,E\otimes L^{\rho-N}), \\
\Pr\colon \A^k(X,E\otimes L^{\rho-N}) \to \E^{0,k}(\Pk^N, E\otimes \Ok(\rho-N))
\end{multline*}
and the global
Koppelman formula \eqref{kopp2} holds on $X$ for $\phi\in\A^q(X,F\otimes L^{\rho-N})$.
\end{thm}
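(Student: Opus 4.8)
The plan is to reduce Theorem \ref{mojje} to the local statements that are already available in the literature, namely \cite[Theorem~1.4]{AS1}, by verifying that the kernels $k$ and $p$ given by \eqref{valp} are locally of exactly the type studied there. The global Koppelman formula \eqref{kopp2} is already established pointwise on $X_{reg}$ in Proposition \ref{basbox}, so the real content of the theorem is twofold: first, that the operators $\K$ and $\Pr$ map the sheaves $\A^*$ into the asserted target sheaves (in particular that $\K$ preserves $\A^*$ and that $\Pr$ actually lands in \emph{smooth} forms on all of $\Pk^N$), and second, that \eqref{kopp2} persists as an identity of currents across $X_{sing}$, not merely on the regular part.

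First I would examine the structure of the two kernels. By \eqref{valp} we have $k=\pm\sko(g\w B\w H)\w\omega$ and $p=\pm\sko(g\w H)\w\omega$, where $\omega$ is the structure form. The factor $\sko(g\w H)$ is smooth on $\Pk^N\times\Pk^N$ (the weight $g$ and the Hefer morphism $H$ are smooth, and $\sko$ just extracts the $(N,*)$-component and divides by $\varOmega$), and $\omega$ is almost semi-meromorphic and smooth on $X_{reg}$ by the proposition preceding \eqref{0skunk}. For the kernel $k$ the additional factor $B$ contributes the genuine Cauchy-type singularity along the diagonal; since $B$ is almost semi-meromorphic (as noted via blowing up $\Pk^N\times\Pk^N$ along $\Delta$), the product $g\w B\w H$ and hence $k$ is a sum of products of almost semi-meromorphic currents with smooth forms, so by Proposition \ref{pm1} the integrals in \eqref{kopp11} are well-defined as principal values and the resulting operators act within the pseudomeromorphic framework. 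The key point is that, when read in local coordinates at a point of $X_{reg}$ together with the structure form $\omega$, these are precisely the integral operators analyzed in \cite{AS1}; that paper establishes that such operators preserve the sheaves $\A^*_X$ and that the corresponding projection operator, built from the smooth kernel $p$, yields smooth forms. The mapping property of $\Pr$ into $\E^{0,k}(\Pk^N, E\otimes\Ok(\rho-N))$ follows because $p$ is smooth in the $z$-variable and the integration in $\zeta$ against $\omega$ (a current of finite order on $X$) produces a smooth form on the full projective space, not just on $X$.

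The main obstacle, and the step requiring the most care, is passing from the pointwise identity \eqref{kopp2} valid on $X_{reg}$ to the full identity of currents on all of $X$, i.e.\ controlling the behavior across $X_{sing}$. Here the regularization by $\chi_\delta=\chi(|h|/\delta)$ of Lemma \ref{snoklus} is the essential tool: because $\chi_\delta\omega\to\omega$ and $\dbar\chi_\delta\w\omega\to 0$, the currents $\K\phi$ and $\Pr\phi$ extend across $X_{sing}$ without residual boundary contributions, and commuting $\dbar$ past the integral operator produces no extra terms supported on $X_{sing}$. The cited ingredients \cite[Lemma~5.2]{AS1} and \cite[Lemma~9.5]{AL}, already invoked in the proof of Proposition \ref{basbox} to justify setting $\lambda=0$, together with the dimension-principle fact \eqref{pm4} that $\1_{X_{sing}}R=0$, guarantee that the formula is stable under this limit. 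Thus I would conclude by invoking \cite[Theorem~1.4]{AS1} to transfer the local mapping properties to the sheaves $\A^*$ globally, and then quoting the regularization argument of Lemma \ref{snoklus} to upgrade \eqref{kopp2} from $X_{reg}$ to an identity holding on all of $X$ in the sense of currents.
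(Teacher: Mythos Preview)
Your proposal is correct and aligns with the paper's own treatment: the paper gives no separate proof of Theorem~\ref{mojje} beyond observing that the operators $\K$ and $\Pr$ defined by \eqref{kopp11} and \eqref{valp} are, locally, precisely of the type analyzed in \cite{AS1}, and then invoking \cite[Theorem~1.4]{AS1} directly. Your write-up supplies the surrounding justification (almost semi-meromorphic structure of $B$ and $\omega$, smoothness of $\sko(g\wedge H)$, the $\chi_\delta$-regularization from Lemma~\ref{snoklus} and \eqref{pm4} for the passage across $X_{sing}$) that the paper leaves implicit, but the strategy is the same.
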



\section{The non-reduced case}\label{nonred}
Now assume that $i\colon X\to \Pk^N$ has pure dimension $n$ but is non-reduced. Then
we still have an ideal sheaf $\J_X\subset \Ok_{\Pk^N}$ that has pure dimension $n$
but $\J_X$ is no longer radical, i.e., there are nilpotent elements.  Still the structure
sheaf of $X$ has the representation $\Ok_X=\Ok_{\Pk^N}/\J_X$.
The underlying
reduced space $X_{red}$ is associated with  the radical ideal $\sqrt{\J_X}=\J_{X_{red}}$.  

Let $X_{reg}$ be the subset of $X$ where $X_{red}$ is smooth and in addition
$\J_X$ is Cohen-Macaulay. In a \nbh $\U$ of a point $x$ in $X_{reg}$, which is an open dense subset of $X$, we can choose local coordinates $(z,w)$ such that 
$X_{red}\cap\U=\{w=0\}$. It turns out, see, e.g., \cite{AL}, that there are monomials
$1, w^{\alpha_1}, \ldots, w^{\nu-1}$ such that each $\phi$ in $\Ok_X$ has a unique
representation
\begin{equation}\label{rep}
\phi=\hat\phi_0(z)\otimes 1+ \cdots + \hat\phi_{\nu-1}(z)\otimes w^{\nu-1}.
\end{equation}
Thus $\Ok_X$ has the structure of a free $\Ok_{X_{red}}$-module in $\U$.

We say that $\Phi$ in $\E^{0,*}_{\Pk^N}$ is in $\Kers i^*$ 
if in a \nbh of each point in $X_{reg}$, $\Phi$ is in the the subsheaf  of $\E_{\Pk^N}^{0,*}$ generated by
$\J_X, \bar \J_{X_{red}}$ and $d\bar \J_{X_{red}}$. 
%
As in the reduced case we define
$\E_X^{0,*}=\E_{\Pk^N}/\Kers i^*$, and again it is independent of the choice
of local embedding of $X$.
It turns out that at each point in $X_{reg}$ and coordinates $(z,w)$ as above we have
a unique representation \eqref{rep} of $\phi$ in $ \E_X^{0,*}$ where
$\hat\phi_j$ are in $\E_{X_{red}}^{0,*}$.  

We define the sheaf of $(n,*)$-currents on $X$ as the dual of 
 $\E_{X}^{0,n-*}$, 
so that such a current $\tau$ is represented by a 
$(N,N-n+*)$-current $i_*\tau$ in $\Pk^N$ that is annihilated by $\Ker i^*$.

Basically all facts in Section \ref{blip1} now hold verbatim, except for that 
one has to replace $X$ by $X_{red}$ in \eqref{pater} and 
slightly modify the proof of Lemma \ref{pucko}. 
The existence of the current $\omega$ on $X$ such that \eqref{deffo}
holds follows from Lemma \ref{pucko} but in the non-reduced
case we give no meaning to that $\omega$ is almost semi-meromorphic
and smooth on $X_{reg}$. The first part of \eqref{apfot} just means
that $\1_W R=0$ and this follows from \cite[Corollry 6.3]{AL}.
The second part of \eqref{apfot} follows from the first part precisely as before.

Following \cite{AL} we can also make the construction 
of Koppelman  formulas in Section \ref{blip2}  and define
sheaves $\A^*$ on $X$ so that Theorem \ref{mojje} holds.



\begin{remark}\label{kobolt}
Recall that a holomorphic differential operator $L$ is Noetherian with respect to the ideal 
$\J$ if $L\phi$ vanishes on $Z$ if $\phi\in \J$. 
As in the local case, cf., \cite[Remark 6.6]{AS1}, 
there is a tuple $\L$ of
global Noetherian operators on $\Pk^N$ with almost semi-meromorphic  coefficients so that
$$
\omega.\xi=\int_Z \L\xi,
$$
cf., \cite[Theorem 4.1 and Proposition 5.1]{Asznajdman}.
\end{remark}

\section{Global solutions}\label{globsol}

To begin with  we  consider a  Hefer morphism, introduced in \cite{AG}, for the
complex  $E_\bullet\otimes\Ok(\rho), a$ for large $\rho$.
Let $E'$ denote the complex of trivial bundles over
$\C^{N+1}$ that we get from $E$, and let $A$ denote the
corresponding mappings (which then formally are
just the original matrices $a$).
Let  $\delta_{w-z}$ denote interior multiplication by
$$
2\pi i\sum_0^N (w_j-z_j)\frac{\partial}{\partial w_j}.
$$
in $\C^{N+1}_w\times\C^{N+1}_z$.

\begin{prop} \label{kowalski}
There exist $(k-\ell,0)$-form-valued 
mappings 
$$
h_k^\ell=\sum_{ij}(h_k^\ell)_{ij} e_{\ell i} \otimes e_{k j}^\ast :
\C^{N+1}_w\times\C^{N+1}_z\to\Hom(E'_k , E'_\ell),
$$
such that $h_k^\ell = 0$ for $k<\ell$, $h_\ell^\ell = I_{E'_\ell}$,
\be \label{fraser}
\delta_{z-w} h_k^\ell =h_{k-1}^\ell A_k(w) - A_{\ell+1}(z) h_k^{\ell+1},
\ee
and the cooefficients in the form 
$(h_k^\ell)_{ij}$ are homogeneous polynomials of degree
$d_{k}^j-d_{\ell}^i-(k-\ell)$. 
\end{prop}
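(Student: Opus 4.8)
The plan is to build the $h_k^\ell$ by induction, solving \eqref{fraser} at each step with a polynomial homotopy for the contraction $\delta_{z-w}$ attached to the Koszul complex on $w-z$. First I would record the scalar Hefer device: for a homogeneous polynomial $p$ of degree $m$,
\[
p(w)-p(z)=\delta_{z-w}\Big(\tfrac{1}{2\pi i}\sum_j g_j\,dw_j\Big),\qquad
g_j(w,z)=\int_0^1(\partial_{w_j}p)\big((1-t)z+tw\big)\,dt ,
\]
where each $g_j$ is a polynomial, bihomogeneous of total degree $m-1$ in $(w,z)$ jointly. By the usual Koszul bookkeeping this segment-integration recipe extends to a homotopy $\kappa$ on $dw$-forms with polynomial coefficients that raises the $dw$-degree by one, lowers the joint polynomial degree by one, preserves bihomogeneity, and satisfies $\delta_{z-w}\kappa+\kappa\,\delta_{z-w}=\mathrm{id}-e_0$, where $e_0$ is evaluation on the diagonal $\{w=z\}$ followed by projection to $dw$-degree $0$.

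With $\kappa$ in hand I would set $h_\ell^\ell=I_{E'_\ell}$ (a $(0,0)$-form, so the degree prescription $d_\ell^j-d_\ell^i$ holds trivially) and construct $h_k^\ell$ for $k>\ell$ by induction on $k-\ell$. Assuming $h_{k-1}^\ell$ and $h_k^{\ell+1}$ already built and satisfying \eqref{fraser}, put $G=h_{k-1}^\ell A_k(w)-A_{\ell+1}(z)\,h_k^{\ell+1}$, a $(k-\ell-1,0)$-form, and define $h_k^\ell:=\kappa G$. The key algebraic point is that $G$ is $\delta_{z-w}$-closed: applying $\delta_{z-w}$ and the two inductive instances of \eqref{fraser} gives
\[
\delta_{z-w}G=\big(h_{k-2}^\ell A_{k-1}-A_{\ell+1}h_{k-1}^{\ell+1}\big)A_k
+A_{\ell+1}\big(h_{k-1}^{\ell+1}A_k-A_{\ell+2}h_k^{\ell+2}\big),
\]
the sign in the second group arising because the odd contraction $\delta_{z-w}$ passes the odd map $A_{\ell+1}(z)$. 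Since $E'_\bullet,A$ is a complex, $A_{k-1}A_k=0$ and $A_{\ell+1}A_{\ell+2}=0$ kill the outer terms, while the middle terms $-A_{\ell+1}h_{k-1}^{\ell+1}A_k$ and $+A_{\ell+1}h_{k-1}^{\ell+1}A_k$ cancel; hence $\delta_{z-w}\kappa G=G-e_0G$, and $e_0G=0$ for $k>\ell$ (for $k=\ell+1$, $G=A_{\ell+1}(w)-A_{\ell+1}(z)$ vanishes on the diagonal), so \eqref{fraser} holds with $h_k^\ell=\kappa G$; the vanishing $h_k^\ell=0$ for $k<\ell$ is built into the induction.

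Finally I would track degrees. By \eqref{degar} the entry $(A_k)_{mj}$ has degree $d_k^j-d_{k-1}^m$, and by induction $(h_{k-1}^\ell)_{im}$ is a $(k-1-\ell)$-form with coefficients of degree $d_{k-1}^m-d_\ell^i-(k-1-\ell)$; so each entry of $G$ is a $(k-\ell-1)$-form whose coefficient is bihomogeneous of degree $d_k^j-d_\ell^i-(k-1-\ell)$. Because $\kappa$ raises the $dw$-degree by one and lowers the coefficient degree by one, $(h_k^\ell)_{ij}=(\kappa G)_{ij}$ is a $(k-\ell,0)$-form with coefficients homogeneous of degree $d_k^j-d_\ell^i-(k-\ell)$, exactly as required. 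The main obstacle is the sign bookkeeping in the cocycle identity together with the verification that $\kappa$ returns genuinely homogeneous polynomial entries of precisely the prescribed degree; both hinge on exploiting the joint $(w,z)$-homogeneity of the segment-integration homotopy, not merely its holomorphy.
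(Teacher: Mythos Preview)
The paper does not actually supply a proof of this proposition; it is stated without argument, the construction being attributed to \cite{AG} via the opening line of Section~\ref{globsol}. Your inductive scheme---build $h_k^\ell$ by applying a polynomial Koszul homotopy $\kappa$ to the right-hand side $G$ of \eqref{fraser}, after checking that $G$ is $\delta_{z-w}$-closed---is exactly the argument carried out in that reference, so there is nothing to compare.

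Two remarks on the write-up. First, the sign you invoke when $\delta_{z-w}$ ``passes the odd map $A_{\ell+1}(z)$'' is not a property of interior multiplication acting on ordinary matrix-valued forms (there $A_{\ell+1}(z)$ is a $0$-form and would commute with $\delta_{z-w}$); it is the super-Leibniz rule coming from the total grading in which $\Hom(E'_{\ell+1},E'_\ell)$ contributes degree $-1$. That convention is implicit throughout the paper (cf.\ $\nabla_a$, $\nabla_\eta$ and \eqref{hrel}), so your computation is correct in context, but without it the middle terms do \emph{not} cancel, so this point deserves an explicit sentence rather than a parenthetical. Second, your description of $\kappa$ is only given in form-degree zero; for the inductive step one needs it on $(k-\ell-1)$-forms. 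It suffices to invoke the graded exactness of the Koszul complex on the regular sequence $w_0-z_0,\dots,w_N-z_N$ over $\C[w,z]$: any $\delta_{z-w}$-closed homogeneous form of positive $dw$-degree is $\delta_{z-w}$-exact with a homogeneous primitive of polynomial degree one less, which also disposes of the boundary case where the prescribed degree $d_k^j-d_\ell^i-(k-\ell)$ is $\le 0$.
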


Notice that 
\begin{equation}\label{gammaform}
\gamma_j=d\zeta_j-\frac{\bar\zeta\cdot d\zeta}{|\zeta|^2}\zeta_j
\end{equation}
is  a projective form and that 
\begin{equation}\label{olle2}
\nabla_\eta \gamma_j=2\pi i(z_j-\alpha \zeta_j). 
\end{equation}
Given $h_k^\ell$ in Proposition~\ref{kowalski} we let
$\tau^* h_k^\ell$ be the projectice form we obtain by replacing
$w$ by $\alpha\zeta$ and $dw_j$ by $\gamma_j$.
We then have 
\begin{equation}\label{tau}
\nabla_\eta \tau^*h=\tau^*(\delta_{w-z}h),
\end{equation}
in light of (\ref{olle2}) and \eqref{alfasluten}. 
It is proved in \cite{AG} that if
$$
\kappa_0=\kappa_0(X)=\max d_k^i,
$$
then
\[
H^\ell_k=\sum_{ij}(\tau^\ast h_k^\ell)_{ij}\w \alpha^{\kappa_0-d_k^j}
e_{\ell,i}\otimes e^*_{k,j}, 
\]
is a Hefer morphism for  \eqref{ecomplex} with $\rho=\kappa_0$.
Clearly,  $H$  is holomorphic in $z$.

Recall that $g=\alpha^\nu$ is a holomorphic weight with respect
to $F=\Ok(\nu)$ for $\nu\ge 0$.
From Proposition~\ref{basbox}
we  thus obtain explicit solutions to the $\dbar$-equation
in $L^\ell $ for $\ell\ge\kappa_0(X)-N$.

\begin{prop}\label{patron}
Assume that $X$ is a possibly singular projective  subvariety of $\Pk^N$
of pure dimension $n$,  and $s\ge \kappa_0-N$.
If $\phi$ is a  $\dbar$-closed section in $\A^q(X, L^s)$, $q\ge 1$,  then
$$
\K\phi(z)=\int (H_{\kappa_0}R\w \alpha^{s-\kappa_0+N}\w B)_N\w\phi
$$
is a solution in $\A^{q-1}(X, L^s)$ to $\dbar u=\phi$. 
\end{prop}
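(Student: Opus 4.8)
The plan is to apply the general Koppelman formula of Proposition~\ref{basbox} with the specific holomorphic weight $g=\alpha^{s-\kappa_0+N}$ and the explicit Hefer morphism $H=H_{\kappa_0}$ constructed just above, then argue that the obstruction term $\Pr\phi$ vanishes identically. First I would record that by the hypothesis $s\ge\kappa_0-N$ the exponent $s-\kappa_0+N\ge 0$, so $g=\alpha^{s-\kappa_0+N}$ is indeed a legitimate holomorphic weight with respect to $\Ok(s-\kappa_0+N)$, by the computation in Example~\ref{kant}. Since $H$ is a Hefer morphism for $E_\bullet\otimes\Ok(\kappa_0)$ and $g$ is a weight for $F=\Ok(s-\kappa_0+N)$, the bundle bookkeeping gives $F\otimes L^{\rho-N}=\Ok(s-\kappa_0+N)\otimes L^{\kappa_0-N}=L^s$, so the formula of Proposition~\ref{basbox} (equivalently Theorem~\ref{mojje}) applies to $\phi\in\A^q(X,L^s)$.

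With $\dbar\phi=0$, the Koppelman formula \eqref{kopp2} collapses to
$$
\phi=\dbar\K\phi+\Pr\phi,
$$
so it suffices to show $\Pr\phi=0$, where by \eqref{kloka} and \eqref{valp} the operator $\Pr$ has kernel $p=\pm\sko(g\w H)\w\omega$ coming from the top-degree term $(HR\w g)_N$. The main point is a degree count in the holomorphic $\zeta$-differentials. The key observation is that both the weight $g=\alpha^{s-\kappa_0+N}$ and the Hefer form $H$ are holomorphic in $z$ and, crucially, carry \emph{no} anti-holomorphic differentials $d\bar z$: indeed $\alpha$ is holomorphic in $z$ (Example~\ref{kant}) and the components $H^\ell_k$ are built from $\tau^*h$ and powers of $\alpha$, all holomorphic in $z$. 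Hence for the representation \eqref{kopp11}--\eqref{valp} to produce a nonzero $(0,q-1)$-current in $z$, all of the $q$ anti-holomorphic $z$-differentials of $\phi$ must be absorbed; but in the obstruction term there is no factor $B$ to supply any $d\bar z$, so $(HR\w g)_N$ contains no $d\bar z$ and the full form $(HR\w g)_N\w\phi$ has $z$-bidegree $(0,q)$ with $q\ge 1$.

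The cleanest way I would finish is via the transposed-degree / weight-degree argument used for $\P^N$ itself in Example~\ref{kant} and Remark~\ref{uppdrag}: pushing forward along $\pi\colon(\zeta,z)\mapsto z$, the term $\Pr\phi$ is a section of $\E^{0,q}(\Pk^N,\Ok(s-\kappa_0+N)\otimes\cdots)$, and one shows that the relevant top component simply vanishes for degree reasons once $q\ge 1$, exactly as the last term of \eqref{koppleri} drops out for $q\ge 1$. The sharper statement alluded to after Theorem~\ref{flicka}, that for fixed $q$ one only needs $s\ge\max_{\ell\le N-q}d_\ell^i-(N-q)$, would then come from tracking which summands $H^\ell_k R_k$ actually contribute: only those with $N-q\le k\le N-n$ survive into the relevant bidegree, so only the degrees $d_\ell^i$ with $\ell\le N-q$ enter the requirement that $g$ be a genuine (nonnegative-power) weight. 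I expect the main obstacle to be precisely this degree bookkeeping: verifying that, after the substitution $w\mapsto\alpha\zeta$, $dw_j\mapsto\gamma_j$ defining $H$, the term $(HR\w g)_N$ genuinely has no $d\bar z$-component and that the surviving $\zeta$-degrees force vanishing of $\Pr\phi$ when $q\ge1$, rather than merely reducing to a lower-order obstruction. Everything else—the existence of the formula, the mapping properties into $\A^{q-1}(X,L^s)$, and the interpretation of the integrals as principal values—is supplied verbatim by Proposition~\ref{basbox} and Theorem~\ref{mojje}.
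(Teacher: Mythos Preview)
Your approach is essentially the paper's: take the holomorphic weight $g=\alpha^{s-\kappa_0+N}$ (legitimate since $s\ge\kappa_0-N$) together with the Hefer morphism $H$ for $E_\bullet\otimes\Ok(\kappa_0)$, apply Theorem~\ref{mojje}/Proposition~\ref{basbox}, and observe that $\Pr\phi=0$ for $q\ge 1$ because both $H$ and $\alpha$ are holomorphic in $z$ and contain no $d\bar z$, exactly as the last term in \eqref{koppleri} vanishes in Example~\ref{kant}. The paper does not write out a separate proof; the proposition is simply read off from the sentence preceding it together with the earlier remark ``Clearly, $H$ is holomorphic in $z$''.

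One small slip in your write-up: you say ``the full form $(HR\wedge g)_N\wedge\phi$ has $z$-bidegree $(0,q)$''. In fact this integrand has $z$-bidegree $(0,0)$, since neither $H$, $R$, $g$, nor $\phi(\zeta)$ carries any $d\bar z$. The point is rather that the pushforward $\Pr\phi=\pi_*((HR\wedge g)_N\wedge\phi)$ is therefore a $(0,0)$-form in $z$, whereas the Koppelman identity forces it to be a $(0,q)$-form; hence $\Pr\phi=0$ when $q\ge 1$. Your subsequent paragraph invokes exactly this mechanism, so the argument goes through once that sentence is corrected. The digression on the sharper bound $s\ge\max_{\ell\le N-q}d_\ell^i-(N-q)$ belongs to the proof of Theorem~\ref{flicka}, not to this proposition, and is not needed here.
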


Notice that 
$
\kappa_0-N\le \max (d_k^i-k)\le \reg X-1.
$
Thus the proposition gives a weaker form of Theorem \ref{flicka}.

\begin{remark}\label{patron2}
The associated operator $\Pr$ is precisely the operator in \cite[Example 3.4]{AN} that
realizes the surjectivity of \eqref{eis2}. That is, if $\phi\in \Ok(X, L^s)$, then
$$
\phi(z)=\int (HR\w \alpha^{s-\kappa_0+N})_N\w\phi
$$
is a global section of $\Ok(s)\to \Pk^N$  that coincides with $\phi$ on $X$.
\end{remark}



\begin{ex}\label{proto}
Assume that  $X$ is a complete intersection, i.e., $J$ is generated
by homogeneous forms $a_1^1,\ldots, a_1^p$, of degrees $d^1,\ldots, d^p$, where $p=N-n$. 
Then the Koszul complex generated by $a_1^j$ provides a minimal free resolution, and
it is then easy to see that $\kappa_0= d^1+\cdots+d^p$, cf., Section \ref{globcom} below.
Moreover, by the adjunction formula
$$
K_X=K_{\Pk^N}\otimes\Ok(d^1+\cdots +d^p)=L^{\kappa_0-N-1}.
$$
Here $K_X$ is the Grothendieck dualizing sheaf, which in this case is a line bundle
that is generated by $\omega=\omega_0$. When  $X$ is smooth $K_X$ is just the usual canonical bundle.  If we define $(n,q)$-forms as $(0,q)$-forms with values in $K_X$, then
Proposition \ref{patron} gives an explicit realization of the vanishing
\begin{equation}\label{van2}
H^{n,q}(X,L^\ell)=0,\quad 1\le q\le n, \quad \ell\ge 1.
\end{equation}
If $X$ is smooth this follows precisely  from Kodaira's theorem,
since $L$ is strictly positive on $X$. 
\end{ex}

For the proof of Theorem \ref{flicka} we must make a more careful analysis of the kernels.
Let us introduce the notation
\begin{equation}\label{lar1}
\kappa_q=\kappa_q(X)=\max_{\ell\le N-q} d^i_\ell.
\end{equation}
Notice that
 \begin{equation}\label{lar2}
\kappa_q-N\le =\max_{\ell\le N-q} \big(d^i_\ell -(N-\ell) -\ell\big)\le\reg X-1-q.
\end{equation}

\begin{proof}[Proof of Theorem \ref{flicka}]
Notice that the section  
$h(\zeta,z)=\zeta\cdot\bar z/|z|^2$ of $\Ok_\zeta(1)\otimes\Ok_z(-1)$ is 
non-vanishing on $\Delta$.
Let $\chi(t)$ be a cutoff function as before and let  
$$
\chi_\delta:=\chi(|h|^2/\delta)=\chi\big(|\bar\zeta\cdot z|^2/|z|^2|\zeta|^2\delta\big).
$$
Here $|h|$ denote the natural norm of  the section $h$, whereas in the last term
$|\ |$ denotes norm of points in $\C^{N+1}$. 
For small $\delta$, $\chi_\delta$ is identically $1$ in a \nbh of $\Delta$ and thus
\begin{equation}\label{lilian}
g^\delta:=\chi_\delta-\dbar\chi_\delta\w B
\end{equation}
is a smooth weight (with respect to the trivial line bundle).  
For fixed $z$,    $g^\delta$ vanishes in a \nbh of the hyperplane $h=0$, and therefore
$$
\alpha^{-r}\w g^\delta
$$
is a smooth weight with respect to $\Ok(-r)$  for any $r$, though not holomorphic
in $z$. Now fix $q\ge 1$ and let $t=s-q$.
In particular, for $t\ge \kappa_q-N$ and $\phi\in\A^q(X,L^{t})$, $q\ge 1$,  with $\dbar\phi=0$
we have the formula
\begin{multline}\label{lar3}
\phi(z)=
\dbar\int_\zeta (HR\w\alpha^{t-\kappa_0+N}\w g^\delta\w B)_N\w \phi+\\
\int_\zeta (HR\w\alpha^{t-\kappa_0+N}\w g^\delta\w\phi)_N\w\phi=
\dbar \K^\delta\phi+\Pr^\delta\phi.
\end{multline}
We claim that $\K^\delta\phi$ tends to a current $\K\phi$ in $\A^{q-1}(X,L^t)$ and that $\Pr^\delta\phi\to 0$.  
Taking this for granted, the theorem follows in view of \eqref{lar2}.

To settle the claim we first consider the expression for $\Pr^\delta\phi$ in \eqref{lar3}. 
Since $\phi$ has bidegree $(0,q)$ only
components $H_{N-\ell}R_{N-\ell}$ with $\ell\ge q$ can occur in the integral. 
Thus the total power of $\alpha$ is 
$$
\kappa_0 - d_{N-\ell}^i + t-\kappa_0 +N\ge  
\kappa_0- d_{N-\ell}^i + \kappa_q-N
-\kappa_0+N=\kappa_q-d^i_{N-\ell}\ge 0
$$
in view of \eqref{lar1}. 
Thus 
$$
\Pr^\delta\phi(z)=\sum_{\ell\ge q}\int_X \omega_{n-\ell} \w \phi_{0,q}\w \vartheta\big(\xi_\ell
\w (\chi_\delta-\dbar\chi_\delta\w B)\big),
$$
where $\xi_\ell$ are smooth and holomorphic in $z$.  Since $q\ge 1$ we need some
antiholomorphic differentials with respect to $z$ and they must come from 
$\dbar\chi_\delta\w B$; hence we can forget about $\chi_\delta$.
Since $\chi_\delta=1$ in a \nbh of the diagonal, we can consider
$B$ as smooth. Thus we have to verify that
\begin{equation}\label{aptass}
\omega \w \phi \w\dbar\chi_\delta \to 0, \quad \delta\to 0.
\end{equation}
Since $X$ is irreducible, $h=0$ has positive codimension on $X$, and if $\phi$ is smooth
thus \eqref{aptass} holds in view of Lemma \ref{snoklus}. 
If $\phi$ is in $\A^q$, then it is in $Dom_X$, cf.,\cite{AS1,AL}, and then \eqref{aptass} follows from (the proofs of)
\cite[Lemma 4.1]{AS1} and \cite[Lemma 8.4]{AL}.  In fact, \eqref{aptass} can be 
reformulated as $\1_{h=0} \dbar( \omega\w \phi)=0$. 
 We conclude that $\Pr^\delta\phi\to 0$.    
Notice now that $B\w B=0$ so that 
$$
\K^\delta\phi=\int_\zeta \chi_\delta(HR\w\alpha^{t-\kappa_0+N}\w B)_N\w \phi.
$$
It is proved in \cite{AS1,AL} that 
$(HR\w\alpha^{t-\kappa_0+N}\w B)_N\w \phi$ is in the space $\W^{X\times\Pk^N}$,
and this implies that 
$$
\chi_\delta (HR\w\alpha^{t-\kappa_0+N}\w B)_N\w \phi\to
(HR\w\alpha^{t-\kappa_0+N}\w B)_N\w \phi.
$$
It follows that  
$$
\K^\delta\phi\to  \K\phi=\int_\zeta (HR\w\alpha^{t-\kappa_0+N}\w B)_N\w \phi.
$$
\end{proof}

 \subsection{Examples with negative curvature}\label{negcurv}

We now turn our attention to the case of negative curvature.
We define a Hefer morphism from $h^\ell_k$ in Proposition~\ref{kowalski}
by replacing
$w$ by $\zeta$,  $z$ by $\beta z$, and $dw_j$ by the $\gamma_j$ from
Proposition~\ref{stork}.  The morphism so obtained is a Hefer 
morphism for \eqref{ecomplex}
(i.e., for  $E_\bullet\otimes\Ok(\rho), a$ with $\rho=0$). 
This is verified
in the same way as \cite[Proposition 4.4]{AG}.

This time $H$ is not holomorphic in $z$ but in $\zeta$ instead.  Let $\delta$ be the
depth of the ring $S/J$. This is a number, $0\le \delta\le n$, and choosing
\eqref{kongo} minimal, \eqref{ecomplex} will end up at $k=N-\delta$, which means
that $R=R_{N-n}+\cdots +R_{N-\delta}$.  The variety  $X$ is Cohen-Macaulay
precisely  when $\delta=n$.

\smallskip
From the Koppelman formula we get solutions to $\dbar$
(representation of the  cohomology in the smooth case) 
for $(0,q)$-forms $\phi$ with values in
$\Ok(\ell)$ for $\ell\le -N$ and 
thus solutions as soon as the obstruction term
\begin{equation}\label{ob}
\int (HR\w \beta^{-N-\ell})_N\w\phi
\end{equation}
vanishes. 
Notice that $HR$ has degree at most $N-\delta$ in $d\bar\zeta$
since  $\beta$ and $\gamma_j$ only contain holomorphic differentials
with respect to $\zeta$. Therefore \eqref{ob} must vanish
if $N-\delta+q<N$, i.e., $0\le q\le \delta-1$.

\begin{thm}\label{van2}
Assume that $X$ is a subvariety of $\P^N$ of pure dimension $n$
and $\ell\le -N$. Then for any $\dbar$-closed $(0,q)$-form 
$\phi\in\A_q(X, L^\ell)$,\  $0<q\le \delta-1$,
$$
\psi(z)=\int(HR\w \beta^{-N-\ell}\w B)_N\w \phi 
$$
is a solution in $\A^{q-1}(X, L^\ell)$ to $\dbar \psi=\phi$. 
\end{thm}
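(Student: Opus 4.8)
The plan is to read off $\psi$ as the operator $\K\phi$ produced by the general Koppelman machinery of Proposition~\ref{basbox}, and then to kill the resulting obstruction term by a degree count. Concretely, I would take the weight $g=\beta^{-N-\ell}$ together with the Hefer morphism $H$ just constructed. Since $\ell\le -N$ gives $-N-\ell\ge 0$ and $\beta$ is a weight for $\Ok(-1)$, the power $g=\beta^{-N-\ell}$ is a genuine weight with respect to $F=\Ok(N+\ell)$; moreover $H$ is a Hefer morphism for $E_\bullet\otimes\Ok(\rho),a$ with $\rho=0$ (verified as in \cite[Proposition~4.4]{AG}). Hence $F\otimes L^{\rho-N}=L^{N+\ell}\otimes L^{-N}=L^\ell$, so Proposition~\ref{basbox} applies to $\phi\in\A^q(X,L^\ell)$ and, with $\K,\Pr$ as in \eqref{kloka}, gives the Koppelman formula \eqref{kopp2}, $\phi=\dbar\K\phi+\K\dbar\phi+\Pr\phi$, in which $\K\phi=\psi$. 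Feeding in $\dbar\phi=0$ leaves $\phi=\dbar\psi+\Pr\phi$ with $\Pr\phi=\int_\zeta(HR\w\beta^{-N-\ell})_N\w\phi$, the obstruction \eqref{ob}.

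The crux is to show that $\Pr\phi=0$ when $q\le\delta-1$, and I would do this by counting anti-holomorphic differentials in $\zeta$. The kernel $(HR\w\beta^{-N-\ell})_N$ already carries full holomorphic $\zeta$-degree $N$, so the fibre integral $\int_\zeta$ over $\P^N_\zeta$ can be nonzero only if the integrand also attains anti-holomorphic $\zeta$-degree $N$. The decisive point is that neither $\beta$ nor the forms $\gamma_j$ of Proposition~\ref{stork} (hence none of the building blocks of $H$) contain any $d\bar\zeta$, so the only source of $d\bar\zeta$ in the kernel is the residue current $R$. Choosing \eqref{kongo} minimal, the complex stops at $k=N-\delta$ and $R=R_{N-n}+\cdots+R_{N-\delta}$, so $R$ contributes anti-holomorphic $\zeta$-degree at most $N-\delta$. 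Wedging with the $(0,q)$-form $\phi$, the total anti-holomorphic $\zeta$-degree is at most $N-\delta+q$, which is strictly less than $N$ precisely when $q\le\delta-1$. In that range the integrand never reaches top degree in $d\bar\zeta$, whence $\Pr\phi=0$ and the formula collapses to $\dbar\psi=\phi$.

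Finally, to place $\psi$ in the asserted space I would invoke Theorem~\ref{mojje}: the operator $\K$ attached to $g$ and $H$ maps $\A^q(X,L^\ell)$ into $\A^{q-1}(X,L^\ell)$, so $\psi=\K\phi$ is a solution lying in $\A^{q-1}(X,L^\ell)$. I expect the only genuinely delicate step to be the justification that every $d\bar\zeta$ occurring in the kernel really originates from $R$ — i.e.\ that $H$ and $\beta$ are free of anti-holomorphic $\zeta$-differentials — since everything else is either a direct application of Proposition~\ref{basbox} and Theorem~\ref{mojje} or a clean accounting of form degrees; once that $d\bar\zeta$-freeness is pinned down, both the vanishing of $\Pr\phi$ and the membership of $\psi$ follow at once.
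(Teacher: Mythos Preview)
Your proposal is correct and follows essentially the same route as the paper: apply the Koppelman formula from Proposition~\ref{basbox}/Theorem~\ref{mojje} with weight $g=\beta^{-N-\ell}$ and the Hefer morphism $H$ built from the $\gamma_j$ of Proposition~\ref{stork}, and then kill the obstruction \eqref{ob} by the degree count in $d\bar\zeta$, using that $\beta$ and the $\gamma_j$ are holomorphic in $\zeta$ so that all $d\bar\zeta$ in $HR\w\beta^{-N-\ell}$ come from $R=R_{N-n}+\cdots+R_{N-\delta}$. The paper's argument is precisely this, stated in the paragraph preceding the theorem.
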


We thus have an explicit proof of the vanishing 
$H^{0,q}(X,\Ok(\ell))=0$
for $0\le q\le \delta-1$,  $\ell\le -N$.

\section{Global complete intersections}\label{globcom}

Let us compute the resulting formulas in case $i\colon X\to \Pk^N$ is a global complete
intersection as in Example \ref{proto}.
Let $f_1,\ldots, f_p$, $f_j=a_1^j$, be our given  homogeneous forms of  degrees $d^j$
from Example \ref{proto}, and recall that $p=N-n$. 
Assume that $E_0$ is the trivial line bundle  and let 
$$
E=E^1\otimes\Ok(-d^1)\oplus
\cdots\oplus E^{p}\otimes\Ok(-d^p),
$$
where $E^j$ are trivial line bundles.
Let $e_j$   be basis elements  for $E^j$ and let $e_j^*$ be the dual basis elements.
We  take  
$$
E_k=\Lambda^k E=\sum'_{|I|=k}\Ok(-(d^{I_1}+\cdots + d^{I_k}))E^{I_1}\otimes\cdots
\otimes E^{I_k}
$$
and $a_k\colon E_k\to E_{k-1}$ as interior multiplication by $f=\sum f_j e_j^*$. 
Now
$$
\sigma=\sum_j\frac{\overline{f_j(z)}}{|z|^{2d^j}}e_j/\|f\|^2
$$
is the section of $E$ with minimal norm such that $f\cdot \sigma=1$ outside $Z$, 
if $\|f\|=|f|_{E^*}$. Moreover,
$$
U=\|f\|^{2\lambda}\sum_{k=1}^{m} \sigma\w (\dbar \sigma)^{k-1} \Big|_{\lambda=0}
$$
and
\begin{equation}\label{Rdef}
R=1-\|f\|^{2\lambda}+
\dbar \|f\|^{2\lambda}\w \sum_{k=1}^m \sigma\w (\dbar \sigma)^{k-1}\Big|_{\lambda=0},
\end{equation}
cf., Section \ref{blip2} and \cite{A2};  here $|_{\lambda=0}$ means evaluation at $\lambda=0$ after
analytic continuation. 
Since $\codim Z=p$ the resulting residue current $R$
just consists of the term $R_p$; it 
coincides with the classical Coleff-Herrera product
$$
\dbar\frac{1}{f_p}\w\ldots \w \dbar\frac{1}{f_1}\w e_1\w\ldots\w e_p.
$$
We now compute Hefer morphisms   for the Koszul complex.     
Let $ \tilde h_j(w,z)$ be $(1,0)$-forms in $\C^{n+1}\times\C^{n+1}$ 
of polynomial degrees $d^j-1$ such that
$$
\delta_{w-z} \tilde h_j=f_j(w)-f_j(z)
$$
and let 
$
h_j=\tau^*\tilde h_j 
$
We only have to care about $k\le p$ so $\kappa_0 = d^1+\ldots +d^p$.
Then 
$$
H^\ell_k=\sum'_{|I|=\ell}\sum'_{|J|=k-\ell}\pm
h_{J_1}\w\ldots\w h_{J_{k-\ell}}\w e_I\otimes e^*_{IJ}\w\alpha^{\kappa_0-(d^{J_1}+\cdots +d^{J_{k-\ell}}
+d^{I_1}+\cdots +d^{I_k})}
$$
is a Hefer morphism.  The components of most interest for us are 
$H_k^0$ and $H_k^1$. Since 
$$
H_k^0=\sum'_{|J|=k}\pm
h_{J_1}\w\ldots\w h_{J_{k}}\w  e^*_{J}\w\alpha^{\kappa_0-(d^{J_1}+\cdots +d^{J_{k}})}
$$
it can be more compactly written  formally as 
$$
H_k^0=\alpha^{\kappa_0} \w (\delta_h)_k
$$
where $\delta_h$ denotes formal interior multiplication with
$$
h=\sum \alpha^{-d^j}\w h_j\w e_j^*
$$
and $(\delta_h)_k=(\delta_h)^k/k!.$
In the same way
$$
H_k^1=\alpha^{\kappa_0}\w N(\delta_h)_{k-1},
$$
where
$$
N=\sum_j \alpha^{-d^j}e_j\otimes e_j^*.
$$

\smallskip
Our description of  $U$, $H^1_k$ etc is just to illustrate what these currents look like
in the complete intersection case since they play a rule in the proofs above. As we have seen, however, in the final Koppelman formula only the term 
$$
H_p^0 R_p= h_1\w\ldots\w h_p\w 
\dbar\frac{1}{f_p}\w\ldots \w \dbar\frac{1}{f_1}
$$
of $HR$ occurs.  
It follows that the operator $\K$ in Proposition \ref{patron}, with 
\begin{equation}\label{stamband}
\kappa=s+N-(d^1+\cdots+ d^p)
\end{equation}
has the more explicit form  
\begin{equation}\label{puma1}
\K\phi(z)=\int  \big(\alpha^{\kappa}\w B\w h_1\w\ldots\w h_p\big)_N
\w \dbar\frac{1}{f_p}\w\ldots \w \dbar\frac{1}{f_1}\w \phi
\end{equation}
and the operator $\Pr$ in Remark \ref{patron2} is 
\begin{equation}\label{puma2}
\Pr\phi(z)=\int  \big(\alpha^{\kappa}\w h_1\w\ldots\w h_p\big)_N
\w \dbar\frac{1}{f_p}\w\ldots \w \dbar\frac{1}{f_1}\w \phi.
\end{equation}

\begin{prop} Assume that the projective space $i\colon X\to \Pk^N$ of codimension $p$
is defined by the homogeneous forms
$f_j$ on $\C^{N+1}$ of degree $d^j$, $j=1,\ldots,p$ and assume that
$s\ge  d^1+\cdots + d^p-N$. For $\phi\in \E^{0,k}(X,L^{s})$,
or $\phi\in \A^{k}(X,L^{s})$, we have
the Koppelman formula \eqref{kopp2}  with 
$\K$ and $\Pr$ defined by \eqref{puma1} and \eqref{puma2} and $\Pr$ vanishes
if $k\ge 1$.  
\end{prop}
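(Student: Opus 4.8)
The plan is to specialize the general Koppelman formula of Theorem \ref{mojje} (equivalently Proposition \ref{basbox}) to the explicit Koszul data assembled above, so that the Koppelman identity \eqref{kopp2} is \emph{inherited} and only the shape of the kernels and the vanishing of $\Pr$ remain to be checked. I take the weight $g=\alpha^{s-\kappa_0+N}$ with $\kappa_0=d^1+\cdots+d^p$ and the Hefer morphism $H$ for $E_\bullet\otimes\Ok(\kappa_0)$; the hypothesis $s\ge d^1+\cdots+d^p-N$ is exactly $s-\kappa_0+N\ge0$, which makes $\alpha^{s-\kappa_0+N}$ a genuine holomorphic weight with respect to $\Ok(s-\kappa_0+N)$, cf.\ Example \ref{kant}. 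Since $F\otimes L^{\kappa_0-N}=L^s$ for $F=\Ok(s-\kappa_0+N)$, Theorem \ref{mojje} supplies \eqref{kopp2} for the given $\phi\in\E^{0,k}(X,L^s)$ or $\phi\in\A^k(X,L^s)$, with $\K,\Pr$ as in \eqref{kloka}.

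First I would isolate the surviving component of $HR$. The notation $HR$ in \eqref{kloka} abbreviates the scalar ($E_0$-valued) part $H^0R=\sum_j H_j^0R_j$, cf.\ \eqref{gformel} and the proof of Proposition \ref{basbox}. For a complete intersection the Koszul complex has length $p$, so $E_k=0$ for $k>p$, while the dimension principle (Proposition \ref{pm2}) forces $R_k=0$ for $k<p$; hence $R=R_p$ is the single Coleff--Herrera term and correspondingly $\omega=\omega_0$, as recorded in Example \ref{proto}. Thus $HR=H_p^0R_p$, and the entire kernel in \eqref{valp}/\eqref{kloka} is built from this one product.

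Next I would substitute the explicit Koszul Hefer morphism. Among the multi-indices $J$ with $|J|=p$ and entries in $\{1,\dots,p\}$ only $J=(1,\dots,p)$ occurs, so the residual $\alpha$-power $\alpha^{\kappa_0-(d^1+\cdots+d^p)}$ is trivial and $H_p^0=\pm\,h_1\w\cdots\w h_p\w e^*_{1\cdots p}$. Contracting $e^*_{1\cdots p}$ against $e_1\w\cdots\w e_p$ in $R_p$ gives
\[
H_p^0R_p=\pm\,h_1\w\cdots\w h_p\w\dbar\frac1{f_p}\w\cdots\w\dbar\frac1{f_1}.
\]
Feeding this into \eqref{kloka}, writing $\kappa=s+N-(d^1+\cdots+d^p)$ as in \eqref{stamband}, and gathering the weight factor $\alpha^{s-\kappa_0+N}=\alpha^{\kappa}$, reproduces exactly \eqref{puma1} for $\K$ and \eqref{puma2} for $\Pr$.

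Finally, for the vanishing of $\Pr$ when $k\ge1$ I would argue from the antiholomorphic $z$-degree of the kernel $p$ in \eqref{valp}. In this positive-curvature realization $\alpha$ and the $h_j$, hence $H_p^0$, are holomorphic in $z$ and carry no $d\bar z$, while $R_p$ (equivalently $\omega_0$, of bidegree $(n,0)$) is a current in $\zeta$ alone. Therefore $p=\pm\sko(\alpha^\kappa\w H_p^0)\w\omega_0$ contains no $d\bar z$, so $\Pr\phi=\int_X p\w\phi$ is of type $(0,0)$ in $z$ and must vanish whenever $k\ge1$. I expect the only genuinely fiddly part to be the degree bookkeeping that singles out $H_p^0R_p$ and confirms the cancellation of the $\alpha$-exponents yielding precisely \eqref{puma1}--\eqref{puma2}; once that is in place the vanishing of $\Pr$ is immediate from the absence of $d\bar z$ in the kernel.
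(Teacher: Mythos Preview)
Your proposal is correct and follows essentially the same route as the paper: the proposition is a direct specialization of Proposition~\ref{patron}/Remark~\ref{patron2} (hence of Theorem~\ref{mojje}) to the Koszul data worked out in this section, using that for a complete intersection $R=R_p$ so that only $H_p^0R_p=h_1\w\cdots\w h_p\w\dbar\tfrac{1}{f_p}\w\cdots\w\dbar\tfrac{1}{f_1}$ survives, and the vanishing of $\Pr$ for $k\ge1$ comes from the fact that $H$ and $\alpha$ are holomorphic in $z$ with no $d\bar z$, exactly as in Example~\ref{kant}.
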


\begin{remark}  
In \cite{HePo3}  similar Koppelman formulas are obtained on a,
not necessarily reduced, global complete intersection $X$ for $(0,*)$-forms
with values in $L^{d^1+\cdots + d^p -N-1}$ in the notation from Example \ref{proto}.
They construct Koppelman formulas on homogeneous subvarieties of
 $\C^{N+1}$, keep track of homogeneities and so obtain Koppelman formulas on $X$.
They use the same definition of $\dbar$ as we do. 
However, they only consider solutions to $\dbar u=\phi$ on $X_{reg}$ when $\phi$ is
smooth on $X$ and satisfies a condition $(*)$ that in general is stronger than $\dbar\phi=0$.
There is no discussion whether their solution has some meaning as a current across $X_{sing}$.
The condition $(*)$ on $\phi$ means that (locally) there is a smooth extension $\Phi$ to 
ambient space such that $\dbar\Phi$ is in $\E \J$.  Clearly this implies that 
$\dbar\phi=0$ on $X$ but in general the converse does not hold. In fact, consider 
a reduced hypersurface $X=\{a=0\}\subset \C^{n+1}$ so that $\J=(a)$. Then $(*)$ 
means that there is an extension $\Phi$ such that $\dbar\Phi=\xi a$ for some smooth form $\xi$. Then $0=a\dbar\xi$ and thus $\dbar\xi=0$; hence $\xi=\dbar\eta$ for some smooth $\eta$.  Now 
$\Phi-a\eta$ is $\dbar$-closed and therefore there is a
smooth solution to $\dbar \Psi = \Phi-a\eta$.  It follows that $\psi=i^*\Psi$
is a smooth solution to $\dbar\psi=\phi$.   
However,  it is well-known that there are
smooth $\phi$ with  $\dbar\phi=0$ such that $\dbar\psi=\phi$ has no smooth
solution, see, e.g., \cite[Example 1.1]{AS1}.  
\end{remark}

\subsection{The reduced case} 
Let us consider a more intrinsic-looking representation of $\K$ and $\Pr$ as in
\eqref{kopp11} and \eqref{valp}.  
In order to avoid a Noetherian operator,  cf., Remark \ref{kobolt}, let us in addition assume that $X$ is reduced. 
Let $A_1,\ldots,A_p$ be holomorphic vector fields on $\C^{N+1}$ such that
$$
\delta_{A_p}\cdots\delta_{A_1} df_1\w \ldots\w df_p=(2\pi i)^p,
$$
or equivalently, 
\begin{equation}\label{greta}
df_1\w\cdots df_p\w \delta_{A_p}\cdots\delta_{A_1}(d\zeta_0\w\ldots\w d\zeta_N)=(2\pi i)^p
d\zeta_0\w\ldots\w d\zeta_N.
\end{equation}
Notice that since $\delta_\zeta$ anti-commutes with $\delta_{A_j}$, 
\begin{equation}\label{greta2}
\omega':=\delta_{A_p}\cdots\delta_{A_1} \varOmega
\end{equation}
is a projective form.   Following the proof of \cite[Proposition 6.3]{AN}  we see that
$\omega'$ is a representative for the structure form on $X$, that is, 
\begin{equation}\label{greta3}
\omega=\omega_0=i^*\omega'.
\end{equation}

\begin{ex}
Let 
$$
\delta_A=\delta_{A_1}\cdots\delta_{A_p}, \quad \dbar\frac{1}{f}=\dbar\frac{1}{f_p}\w
\cdots\dbar\frac{1}{f_1}.
$$
Then
\begin{equation}\label{partout}
\delta_A\xi_N=\delta_A(\vartheta(\xi)\w\varOmega)=\pm \vartheta(\xi)\w\omega'.
\end{equation}
In view of \eqref{valp} we thus have that 
$$
\K\phi=\pm\int_X \delta_A(\alpha^\kappa\w B\w h)_N\w \phi, \quad
\Pr\phi=\pm\int_X \delta_A(\alpha^\kappa\w  h)_N\w \phi
$$
for $\phi\in\A^q(X,L^s)$, \ $s=\kappa+d^1+\cdots + d^p-N$, $\kappa\ge 0$.
\end{ex}

\subsection{Explicit formulas for a curve in $\P^N$}\label{helgesson}
Following \cite{Helge} we will now describe how one can find an especially simple expression for the kernel $k$ 
when $X$ is a curve.    Applying $\delta_\eta$ to 
$$
(\alpha^\kappa\w B\w h)_N=\vartheta(\alpha^\kappa\w B\w h)\w\varOmega,
$$
cf., \eqref{0skunk}, we get
\begin{equation}\label{snuva}
\delta_\eta(\alpha^\kappa\w B\w h)_{N}=\pm
\vartheta(\alpha^\kappa\w B\w h)\w\delta_\eta\varOmega.
\end{equation}
We claim that
\begin{equation}\label{koppargruva}
\delta_\eta(\alpha^\kappa\w B\w h)_{N}=(\alpha^\kappa\w h)_{N-1}
\end{equation}
on $X\times X$. 
In fact, consider the weight $g=\alpha^\kappa\w g^\lambda$, cf.,  \eqref{gformel}.
From \eqref{koppar2}, keeping $z$ on $X$ and 
taking $\lambda=0$ we get 
$$
\nabla_\eta(\alpha^\kappa\w B\w h)\w R=\alpha^\kappa\w h \w R
$$
outside the diagonal. Identifying terms of degree $N-1$ in $d\zeta$ we get 
$$
\delta_\eta(\alpha^\kappa\w B\w h)_{N}\w R=(\alpha^\kappa\w h)_{N-1} \w R.
$$
By the generalized Poincar\'e-Lelong formula, $R\w df=(2\pi i)^p [X]$,
we can conclude
that \eqref{koppargruva} holds on $X\times X$.  Combining \eqref{snuva} and \eqref{koppargruva} we
get that
\begin{equation}\label{huva}
\pm\vartheta(\alpha^\kappa\w B\w h)\w\delta_\eta\varOmega=
(\alpha^\kappa\w h)_{N-1}
\end{equation}
on $X\times X$. 

\smallskip
Let us now compute the kernel $k$ where $X$ is parametrized by
$[\zeta_0,\zeta_1]$.  Let $\delta_{\zeta_j}$
denote interior multplication by $\partial/\partial \zeta_j$. 
Notice that
$$
\delta_\eta\delta_{\zeta_2}\cdots \delta_{\zeta_N}\varOmega=2\pi i(\zeta_1z_0-\zeta_0z_1).
$$
If we apply 
$\delta_{\zeta_2}\cdots \delta_{\zeta_N}$ to \eqref{huva} we get 
\begin{equation}\label{huva2}
\pm\vartheta(\alpha^\kappa\w B\w h)=\frac{1}{2\pi i}\frac{\delta_{\zeta_2}\cdots \delta_{\zeta_N}(\alpha^\kappa\w h)_{N-1}}{\zeta_1z_0-\zeta_0z_1}.
\end{equation}
Notice that the denominator on the right hand side is smooth. 
Recalling that
$k(\zeta,z)=\pm\vartheta(\alpha^\kappa\w B\w h)\w\omega$
on $X\times X$, cf., \eqref{valp},
we get from \eqref{huva2} and \eqref{greta2}, cf., \eqref{greta3}, 
\begin{prop}
With the notation above we have 
the explicit formula
\begin{equation}\label{gamas}
k(\zeta,z)=\frac{1}{2\pi i}\frac{\delta_{\zeta_2}\cdots \delta_{\zeta_N}(\alpha^\kappa\w h)_{N-1}}{\zeta_1z_0-\zeta_0z_1} \delta_A\varOmega
\end{equation}
where $X$ is parametrized by $[\zeta_0,\zeta_1]$.  
\end{prop}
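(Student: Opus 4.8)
The plan is to assemble the formula from two facts already in place: the expression \eqref{valp} for the kernel, which on $X\times X$ reads $k=\pm\vartheta(\alpha^\kappa\w B\w h)\w\omega$, and the identification \eqref{greta2}--\eqref{greta3} of the structure form, by which $\omega$ is represented by the projective form $\delta_A\varOmega$. Thus it suffices to find an explicit expression for the single factor $\vartheta(\alpha^\kappa\w B\w h)$ and to wedge the result with $\delta_A\varOmega$; the whole proposition is then a matter of substitution and sign-tracking.

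The genuinely substantial ingredient is the identity \eqref{koppargruva}, that $\delta_\eta(\alpha^\kappa\w B\w h)_N=(\alpha^\kappa\w h)_{N-1}$ on $X\times X$. I would obtain it by taking the weight $g=\alpha^\kappa\w g^\lambda$ in \eqref{koppar2}, applying $\nabla_\eta$ to $\alpha^\kappa\w B\w h$ and reading off the part of degree $N-1$ in $d\zeta$; off the diagonal this gives the identity only after multiplication by $R$. \textbf{The main obstacle} is precisely cancelling this residue factor: to pass from $\delta_\eta(\alpha^\kappa\w B\w h)_N\w R=(\alpha^\kappa\w h)_{N-1}\w R$ to the bare identity one must be able to divide out by $R$, which is legitimate because of the generalized Poincar\'e--Lelong formula $R\w df=(2\pi i)^p[X]$; this reduces the equality of currents to an equality of smooth forms on $X_{reg}$ that then propagates across $X_{sing}$.

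Granting \eqref{koppargruva}, the rest is bookkeeping. Writing $(\alpha^\kappa\w B\w h)_N=\vartheta(\alpha^\kappa\w B\w h)\w\varOmega$ and contracting by $\delta_\eta$ turns \eqref{koppargruva} into \eqref{huva}. I would then apply $\delta_{\zeta_2}\cdots\delta_{\zeta_N}$ to both sides: since $\vartheta(\alpha^\kappa\w B\w h)$ contains no holomorphic $d\zeta$, these contractions slide past it and act only on $\delta_\eta\varOmega$, producing the scalar $2\pi i(\zeta_1z_0-\zeta_0z_1)$ via $\delta_\eta\delta_{\zeta_2}\cdots\delta_{\zeta_N}\varOmega=2\pi i(\zeta_1z_0-\zeta_0z_1)$. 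Solving for $\vartheta(\alpha^\kappa\w B\w h)$ gives \eqref{huva2}, whose numerator $\delta_{\zeta_2}\cdots\delta_{\zeta_N}(\alpha^\kappa\w h)_{N-1}$ is smooth and whose only singularity is the Cauchy-type factor $\zeta_1z_0-\zeta_0z_1$ vanishing on the diagonal. Substituting \eqref{huva2} into $k=\pm\vartheta(\alpha^\kappa\w B\w h)\w\omega$ and replacing $\omega$ by $\delta_A\varOmega$ yields \eqref{gamas}, after checking that the accumulated signs combine to the stated $+$.
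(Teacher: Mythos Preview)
Your proposal is correct and follows essentially the same route as the paper: the argument leading up to the proposition in the text is exactly the chain \eqref{valp} $\to$ \eqref{snuva} $\to$ \eqref{koppargruva} $\to$ \eqref{huva} $\to$ \eqref{huva2} $\to$ \eqref{gamas} that you outline, including the use of the weight $\alpha^\kappa\w g^\lambda$ and the generalized Poincar\'e--Lelong formula $R\w df=(2\pi i)^p[X]$ to strip off the factor $R$. The only cosmetic difference is that the paper leaves the signs as $\pm$ throughout rather than tracking them to a definite $+$.
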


\subsection{Curves in $\Pk^2$}
Assume now that $N=2$ and $X=\{f=0\}$, where $f$ is a $d$-homogeneous form.  Thus $s=\kappa+d-2$.
Let $\tilde h(w,z)=h_0dw_0+h_1 dw_1+ h_2dw_2$ be a Hefer form for $f$; i.e., $h_\ell$ are $d-1$-homogeneous and
\begin{equation}\label{plats}
2\pi i \sum_{\ell=0}^2h_\ell(w,z)(w_\ell-z_\ell)=f(w)-f(z).
\end{equation}
Notice that for degree reasons, 
$$
(\alpha^\kappa\w h)_1=\alpha_0^\kappa\big( h_0(\alpha_0\zeta,z)\gamma_0+
h_1(\alpha_0\zeta,z)\gamma_1+h_2(\alpha_0\zeta,z)\gamma_2\big).
$$
In what follows, for simplicity, let us right $\alpha$ rather than $\alpha_0$.
If $\partial f/\partial \zeta_2$ is generically nonvanishing on $X$ and
$$
A=\frac{2\pi i}{\partial 
f/\partial \zeta_2}\frac{\partial}{\partial\zeta_2},
$$
then $\delta_A df=2\pi i$ (generically) on $X$. 
We get
\begin{equation}\label{gamas2}
k(\zeta,z)=\frac{1}{2\pi i}\frac{\zeta_0d\zeta_1-\zeta_1 d\zeta_0 } {\zeta_1z_0-\zeta_0z_1}
 \frac{2\pi i\delta_{\zeta_2} (\alpha^\kappa\w h)_1}{\partial 
f/\partial \zeta_2}
\end{equation}
Notice furthermore that  
\begin{equation}\label{rakost}
\delta_{\zeta_2} (\alpha^\kappa\w h)_1= \alpha^\kappa\Big(h_2(\alpha\zeta,z)-
\frac{\bar\zeta_2}{|\zeta|^2}\sum_{j=0}^2 h_j(\alpha\zeta,z) \zeta_j\Big).
\end{equation}

\begin{prop} With the notation above we have
\begin{equation}\label{pontus}
k(\zeta,z)=\frac{1}{2\pi i}\frac{\zeta_0d\zeta_1-\zeta_1 d\zeta_0 } {\zeta_1z_0-\zeta_0z_1}
 \frac{2\pi i \alpha^\kappa}{\partial 
f/\partial \zeta_2}\Big(h_2(\alpha\zeta,z)-
\frac{\bar\zeta_2}{|\zeta|^2}\sum_{j=0}^2 h_j(\alpha\zeta,z) \zeta_j\Big).
\end{equation}
\end{prop}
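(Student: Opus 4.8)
The plan is to obtain \eqref{pontus} by feeding the explicit contraction \eqref{rakost} into the specialized kernel formula \eqref{gamas2}; both of these are already in place, so the argument is a bookkeeping computation rather than a new idea. The substantive analytic input---the reduction of $HR$ to the single term $H^0_1R_1$, the degree-count giving $(\alpha^\kappa\w h)_1=\alpha^\kappa\sum_j h_j(\alpha\zeta,z)\gamma_j$, and the Poincar\'e--Lelong identity behind \eqref{koppargruva}---has already been packaged into \eqref{gamas}, so I may take it for granted.

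First I would specialize the general curve kernel \eqref{gamas} to $N=2$. The iterated contraction $\delta_{\zeta_2}\cdots\delta_{\zeta_N}$ then reduces to $\delta_{\zeta_2}$ and $(\alpha^\kappa\w h)_{N-1}$ to $(\alpha^\kappa\w h)_1$, so the only piece left to evaluate is the structure-form factor $\delta_A\varOmega$, cf.\ \eqref{greta2}. With $\varOmega=\delta_\zeta(d\zeta_0\w d\zeta_1\w d\zeta_2)$ and $A=(2\pi i/(\partial f/\partial\zeta_2))\,\partial/\partial\zeta_2$, contracting the three terms of $\delta_\zeta(d\zeta_0\w d\zeta_1\w d\zeta_2)$ by $\partial/\partial\zeta_2$ kills the $d\zeta_2$-factor and leaves only the $d\zeta_0,d\zeta_1$ part, giving
\[
\delta_A\varOmega=\pm\frac{2\pi i\,(\zeta_0\,d\zeta_1-\zeta_1\,d\zeta_0)}{\partial f/\partial\zeta_2}.
\]
Substituting this into \eqref{gamas} and regrouping the scalar factors yields exactly \eqref{gamas2}.

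Finally I would insert \eqref{rakost} for $\delta_{\zeta_2}(\alpha^\kappa\w h)_1$ into \eqref{gamas2}: the factor $\alpha^\kappa$ comes outside the fraction and the bracketed combination $h_2(\alpha\zeta,z)-(\bar\zeta_2/|\zeta|^2)\sum_j h_j(\alpha\zeta,z)\zeta_j$ is precisely what appears in \eqref{pontus}, completing the identification. Establishing \eqref{rakost} itself is the short termwise contraction $\delta_{\zeta_2}\gamma_j=\delta_{j2}-(\bar\zeta_2/|\zeta|^2)\zeta_j$ applied to $\alpha^\kappa\sum_j h_j(\alpha\zeta,z)\gamma_j$. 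I expect the only point needing care to be the overall sign, since several $\pm$'s enter through \eqref{gamas}, the ordering in $\delta_A$, and the contraction of $\varOmega$. Because the Koppelman formula is insensitive to this global normalization, my plan is to carry the signs through the two contractions above and fix the final sign once at the end, rather than to pin it down at each intermediate step.
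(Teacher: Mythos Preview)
Your proposal is correct and follows essentially the same route as the paper: the proposition is stated immediately after \eqref{gamas2} and \eqref{rakost} are in place, and combining them is exactly how \eqref{pontus} is obtained, with \eqref{gamas2} itself coming from \eqref{gamas} via the computation of $\delta_A\varOmega$ that you outline. Your remark about tracking the global sign is appropriate, since the paper carries the ambient $\pm$ from \eqref{gamas} and \eqref{valp} throughout.
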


The second term in the bracket actually cancels out the singularity
if $X$ is smooth.

\begin{prop}\label{koppleri2}
If $X$ is smooth and 
$[\zeta_0,\zeta_1]$ are local homogeneous coordinates, then
\begin{equation}\label{arm}
k(\zeta,z)=\frac{1}{2\pi i}\frac{\zeta_0d\zeta_1-\zeta_1 d\zeta_0 } {\zeta_1z_0-\zeta_0z_1}\frac{\alpha^\kappa 2\pi i h_2(\alpha\zeta,z)}{\partial 
f/\partial \zeta_2}+ \cdots,
\end{equation}
where $\cdots$ is smooth and holomorphic in $z$.  
\end{prop}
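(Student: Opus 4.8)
The plan is to start from the explicit expression \eqref{pontus} for $k$ and split off the two summands coming from the bracket. The summand proportional to $h_2(\alpha\zeta,z)$ is exactly the first term displayed in \eqref{arm}, so the whole task reduces to showing that the contribution of the second summand,
\[
-\frac{1}{2\pi i}\frac{\zeta_0 d\zeta_1-\zeta_1 d\zeta_0}{\zeta_1 z_0-\zeta_0 z_1}\,\frac{2\pi i\,\alpha^\kappa}{\partial f/\partial\zeta_2}\,\frac{\bar\zeta_2}{|\zeta|^2}\sum_{j=0}^2 h_j(\alpha\zeta,z)\zeta_j,
\]
is smooth and holomorphic in $z$. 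Every factor here except the Cauchy factor $1/(\zeta_1 z_0-\zeta_0 z_1)$ is manifestly smooth near the diagonal (recall $\partial f/\partial\zeta_2$ is nonvanishing there and $\alpha\to 1$), and every factor is holomorphic in $z$, since $\alpha=z\cdot\bar\zeta/|\zeta|^2$ is holomorphic in $z$ and the $h_j(w,z)$ are polynomial. So the only issue is to verify that the apparent pole along the diagonal is spurious.

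The key observation I would establish is that the numerator $G(\zeta,z):=\sum_{j=0}^2 h_j(\alpha\zeta,z)\zeta_j$ vanishes on the diagonal of $X\times X$. Indeed, on the diagonal $z=\zeta$ one has $\alpha=1$, hence $\alpha\zeta=\zeta=z$, and differentiating the Hefer identity \eqref{plats} at $w=z$ yields the standard relation $h_j(z,z)=\tfrac{1}{2\pi i}\,\partial f/\partial\zeta_j(z)$; therefore $G(\zeta,\zeta)=\tfrac{1}{2\pi i}\sum_j \zeta_j\,\partial f/\partial\zeta_j(\zeta)$ vanishes on $X$ by Euler's identity $\sum_j \zeta_j\,\partial f/\partial\zeta_j=(\deg f)f$ together with $f|_X=0$. (Equivalently, substituting $w=\alpha\zeta$ in \eqref{plats} and using $f(\alpha\zeta)=\alpha^{\deg f}f(\zeta)=0=f(z)$ on $X\times X$ gives $\alpha\sum_j h_j\zeta_j=\sum_j h_j z_j$, whose right-hand side again vanishes on the diagonal by the same computation.) Since $G$ is holomorphic in $z$, and for $\zeta$ fixed the denominator $\zeta_1 z_0-\zeta_0 z_1$ vanishes at the single point $z=\zeta$ of $X$, the quotient $G/(\zeta_1 z_0-\zeta_0 z_1)$ ought to have no pole.

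To make this rigorous I would parametrize $X$ locally by $\sigma=z_1/z_0$ and set $t=\zeta_1/\zeta_0$, so that $\zeta_1 z_0-\zeta_0 z_1=-\zeta_0 z_0(\sigma-t)$ with $-\zeta_0 z_0$ smooth and nonvanishing. For fixed $\zeta$ the function $\sigma\mapsto G(\zeta;\sigma)$ is holomorphic and vanishes at $\sigma=t$, so the one–variable division lemma with parameters, concretely
\[
\frac{G(\zeta;\sigma)}{\sigma-t}=\int_0^1 \partial_\sigma G\bigl(\zeta;\,t+u(\sigma-t)\bigr)\,du,
\]
exhibits the quotient as holomorphic in $\sigma$ and smooth in the parameters $(\zeta,\bar\zeta)$. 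Multiplying back by the remaining smooth, holomorphic-in-$z$ factors and by the form $\zeta_0 d\zeta_1-\zeta_1 d\zeta_0$ then shows that the second summand is smooth and holomorphic in $z$, which is exactly the asserted decomposition. I expect the only delicate point to be precisely this parametric division step: one must confirm that the quotient retains smooth dependence on the antiholomorphic variables $\bar\zeta$ that enter through $\alpha$ and through $\bar\zeta_2/|\zeta|^2$, and this is exactly what the integral representation of the quotient guarantees.
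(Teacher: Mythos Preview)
Your proof is correct and follows essentially the same route as the paper's: both start from \eqref{pontus}, isolate the $h_2$-term, and reduce the claim to showing that the contribution of $\sum_j h_j(\alpha\zeta,z)\zeta_j$ is divisible by $\zeta_1 z_0-\zeta_0 z_1$ with a smooth, holomorphic-in-$z$ quotient. The only organizational difference is that the paper first differentiates the Hefer identity in $w_j$ and combines with Euler's relation to obtain the explicit algebraic factorization $\sum_j\zeta_j h_j(\alpha\zeta,z)=\sum_\ell b_\ell(\zeta,\alpha\zeta,z)(\alpha\zeta_\ell-z_\ell)$ on $X$, and then argues that each $\alpha\zeta_\ell-z_\ell$ is divisible by $\zeta_1-z_1$ in the local parameter; you instead verify vanishing of the sum on the diagonal directly (via $h_j(z,z)=\tfrac{1}{2\pi i}\partial f/\partial z_j$ and Euler) and invoke Hadamard's integral formula for the one-variable division with parameters. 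Your version makes the smooth/holomorphic dependence of the quotient explicit, while the paper's version makes the algebraic structure of the remainder explicit; either suffices for the proposition.
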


Notice that $2\pi i h_2(\alpha\zeta,z)=\partial 
f/\partial \zeta_2$ on the diagonal.

\begin{proof} 
Differentiating \eqref{plats} with respect to $w_j$ gives
$$
2\pi i h_j(w,z)=\frac{\partial f}{\partial w_j}(w)-2\pi i\sum_\ell
\frac{\partial h_\ell}{\partial w_j}(w_\ell-z_\ell).
$$
Since $f$ is $d$-homogeneous,  
$$
\sum_{j=0}^2 \zeta_j\frac{\partial f}{\partial\zeta_j}=d \cdot f(\zeta).
$$
We conclude that
$$
\sum_0^2 \zeta_j\tilde h_j(\alpha\zeta_j,z)=d \alpha^{d-1} \cdot f(\zeta)+
\sum_0^2b_\ell(\zeta,\alpha\zeta,z)(\alpha\zeta_\ell-z_\ell),
$$
where $b_\ell(\zeta,w,z)$ are holomorphic, $d-2$-homogeneous 
in $(w,z)$ and 1-homogeneous in $\zeta$.   
Since $\zeta$ is on $X$, $f(\zeta)=0$.  We thus get \eqref{arm}
where 
$$
\cdots = \frac{1}{2\pi i}\frac{\zeta_0d\zeta_1-\zeta_1 d\zeta_0 } {\zeta_1z_0-\zeta_0z_1} B
$$
and
$$
B=\frac{\bar\zeta_2}{|\zeta|^2} \alpha^{\kappa} 
\sum_0^2b_j(\zeta,\alpha\zeta,z)(\alpha\zeta_j-z_j).
$$
Without loss of generality we may assume that  
$\zeta_0=z_0=1$ and that $\zeta_1$ is a local coordinate on $X$ so that
$\zeta'=g(\zeta_1)$. Since $\alpha=1$ on the diagonal we then have
that $B=B' (\zeta_1-z_1)$
where $B'$ is holomorphic in $z$.  After homogenization we get that
$B=B''(z_0\zeta_1-z_1\zeta_0),$ where $B''$ is holomorphic in $z$.
Thus the proposition follows.
\end{proof}

\begin{cor}
If $\zeta_1$ is a local coordinate, then taking $\zeta_1=\tau$, $z_1=t$,
and $\zeta_0=z_0=1$, we get
\begin{equation}\label{arm2}
k(\tau, t)=\frac{1}{ 2\pi i}\frac{d\tau } 
{\tau-t}\frac{\alpha^\kappa2\pi i h_2(1,\tau,\zeta_2; 1,t,z_2))}{\partial 
f/\partial \zeta_2}+ \cdots,
\end{equation}
where $\cdots$ is smooth and holomorphic in $t$.
\end{cor}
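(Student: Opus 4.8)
The plan is to read the statement off directly from formula \eqref{arm} in Proposition~\ref{koppleri2}, specialized to the affine chart $\zeta_0=z_0=1$ with $\zeta_1=\tau$ and $z_1=t$ serving as the local coordinates on the curve $X$. Since Proposition~\ref{koppleri2} has already separated the singular factor of the kernel $k$ from a remainder that is smooth and holomorphic in $z$, nothing substantially new needs to be established here: the content of the corollary is purely the coordinate change, so the proof should amount to evaluating the two elementary factors of \eqref{arm} in the chosen chart and checking that the remainder keeps its regularity.

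First I would compute the two scalar factors of the leading term in \eqref{arm}. Because $\zeta_0\equiv 1$ is constant in this chart we have $d\zeta_0=0$, so the numerator of the first quotient collapses, $\zeta_0\,d\zeta_1-\zeta_1\,d\zeta_0=d\zeta_1=d\tau$. Likewise the denominator becomes $\zeta_1z_0-\zeta_0z_1=\tau-t$. Substituting these two identities into \eqref{arm}, and writing the points as $\zeta=(1,\tau,\zeta_2)$ and $z=(1,t,z_2)$ inside $h_2$, reproduces exactly the displayed leading term of \eqref{arm2}. The factor $\alpha^\kappa\,2\pi i\,h_2/(\partial f/\partial\zeta_2)$ carries over unchanged; here one may, if desired, use that $h_2$ is $(d-1)$-homogeneous in its first slot to rewrite the argument $\alpha\zeta$ in the affine coordinates $(1,\tau,\zeta_2)$.

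It then remains to confirm that the remainder $\cdots$, which by Proposition~\ref{koppleri2} is smooth and holomorphic in the variable $z$, remains smooth and holomorphic in $t$ after the specialization. This is the one point that deserves a word of care: in the chosen chart the affine coordinate on $X$ is $z_1=t$, and since $X$ is parametrized locally by $[\zeta_0,\zeta_1]$, equivalently by $t$, the remaining coordinate $z_2$ is itself a holomorphic function of $t$ along $X$. Hence ``holomorphic in $z$'' restricts, under $z=(1,t,z_2(t))$, to ``holomorphic in $t$'', and the remainder retains the asserted regularity. Everything else is the routine bookkeeping of the change of variables, so the main (and only mild) obstacle is simply recording that the local parametrization of the curve turns the $z$-regularity of Proposition~\ref{koppleri2} into $t$-regularity.
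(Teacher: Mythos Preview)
Your proposal is correct and is exactly the (implicit) argument the paper has in mind: the corollary is stated without proof immediately after Proposition~\ref{koppleri2}, and your derivation---specializing \eqref{arm} to the chart $\zeta_0=z_0=1$, $\zeta_1=\tau$, $z_1=t$, and observing that holomorphicity in $z$ becomes holomorphicity in $t$ via $z_2=z_2(t)$ along $X$---is precisely what is intended.

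One small correction to your parenthetical remark: $h_2$ is $(d-1)$-homogeneous jointly in $(w,z)$, not in the first slot alone, so you cannot simply pull out a power of $\alpha$. The honest way to pass from $h_2(\alpha\zeta,z)$ in \eqref{arm} to $h_2(1,\tau,\zeta_2;1,t,z_2)$ in \eqref{arm2} is to note that in the chosen chart $\alpha-1=\big(\bar\tau(t-\tau)+\bar\zeta_2(z_2-\zeta_2)\big)/|\zeta|^2$, which is holomorphic in $(t,z_2)$ and, after restricting $z_2=z_2(t)$, $\zeta_2=\zeta_2(\tau)$ to $X$, carries a factor $(\tau-t)$; hence $\big(h_2(\alpha\zeta,z)-h_2(\zeta,z)\big)\cdot\frac{d\tau}{\tau-t}$ is smooth and holomorphic in $t$ and can be absorbed into the remainder $\cdots$.
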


\begin{ex}
If $f(z)=z_0^3+z_1^3+z_2^3$, then $X$ is a smooth 
surface of genus $1$.   We can take
$$
\tilde h(z,w)=\sum_{j=0}^2(z_j^2+z_jw_j+w_j^2)dw_j/2\pi i.
$$
Then, where $[z_0,z_1]$ are homogeneous coordinates, 
\begin{multline*}
k(\zeta,z)=\alpha^\kappa \frac{1}{2\pi i}\frac{\zeta_0d\zeta_1-\zeta_1 d\zeta_0 } {\zeta_1z_0-\zeta_0z_1}\frac{1}{3\zeta_0^2}
\Big[z_2^2+\alpha z_2\zeta_2+\alpha^2\zeta_2^2 - \\
\frac{\bar\zeta_2}{|\zeta|^2}\Big(z_0^2\zeta_0+z_1^2\zeta_1+z_2^2\zeta_2
+\alpha(z_0\zeta_0^2+z_1\zeta_1^2+z_2\zeta_2^2)\Big)\Big].
\end{multline*}
\end{ex}

Even if $X$ is not smooth, by the same argument,
one can identify the principal term of the kernel
$k$.

\begin{ex}
The curve $X=\{z_1^3-z_2^2 z_0=0\}$ has a cusp singularity at $[1,0,0]$ and is smooth elsewhere. It is globally parametrized by
$$
\Pk^1\to X\subset \Pk^2, \quad  [t_0,t_1]\mapsto[t_0^3,t_1^2 t_0,t_1^3].
$$
We can choose the Hefer form
$$
2\pi i \tilde h(w,z)=z_2^2 dw_0 -(z_1^2+z_1 w_1 + w_1^2)dw_1 +(z_2+w_2)w_0 dw_2.
$$
By formula \eqref{pontus} we can now express $k$ completely in terms
of the parameters $[t_0,t_1]$ and $[\tau_0,\tau_1]$. However, we restrict to
considering the principal term.  Since we are primarily interested in the
singularity, we consider the standard affinization where $z_0=\zeta_0=1$.
By the recipe above, then  
\begin{equation}\label{pass}
k=\frac{1}{2\pi i}\frac{d\zeta_1}{\zeta_1-z_1}\alpha^\kappa\frac{z_2+\alpha\zeta_2}
{2\zeta_2}+\cdots
\end{equation}
In this case $\cdots$ is not smooth but at least the singularity is smaller
than in the leading term. Since $\alpha-1=\Ok(\zeta_1-z_1)+\Ok(\zeta_2-z_2)$, we can delete $\alpha$ as well in the leading term in \eqref{pass}.
We then have
$$
k=\frac{1}{2\pi i}\frac{(\zeta_2^2-z_2^2)d\zeta_1}
{(\zeta_1-z_1)(\zeta_2-z_2)2\zeta_2}+\cdots
$$
Letting 
$t=t_1/t_0$ and $\tau=\tau_1/\tau_0$ we are then left with
$$
k=\frac{1}{2\pi i} \frac{(\tau^6 - t^6) d\tau}
{(\tau^2-t^2)(\tau^3-t^3)\tau^2}+\cdots
$$
where the leading term is precisely the kernel in the last example in \cite[Section 8]{AS1}.
\end{ex}

\def\listing#1#2#3{{\sc #1}:\ {\it #2},\ #3.}

\end{document}